\newtheorem{fact}{Fact}[section]
\newtheorem{lemma}[fact]{Lemma}
\newtheorem{theorem}[fact]{Theorem}
\newtheorem{rremark}[fact]{Remark}
\newenvironment{remark}{\begin{rremark} \rm}{\end{rremark}}
\newtheorem{corollary}[fact]{Corollary}
\DeclareMathOperator\Id{Id}
\DeclareMathOperator\Supp{Supp}
\DeclareMathOperator\Hom{Hom}
\DeclareMathOperator\End{End}
\DeclareMathOperator\Span{Span}
\author{Samantha Moore}
\address{Department of Mathematics, University of North Carolina at Chapel Hill, USA}
\email{scasya@live.unc.edu}
\newcommand\blfootnote[1]{%
  \begingroup
  \renewcommand\thefootnote{}\footnote{#1}%
  \addtocounter{footnote}{-1}%
  \endgroup
}
\title{Hyperplane Restrictions of Indecomposable $n$-Dimensional Persistence Modules}
\begin{document}
\blfootnote{2020 \textit{Mathematics Subject Classification.} 55N31, 13C05.}
\blfootnote{This material is based upon work supported
by the National Science Foundation Graduate Research Fellowship under Grant No. 1650116.}

\begin{abstract}
Understanding the structure of indecomposable $n$-dimensional persistence modules is a difficult problem, yet is foundational for studying multipersistence. To this end, Buchet and Escolar showed that any finitely presented rectangular $(n-1)$-dimensional persistence module with finite support is a hyperplane restriction of an $n$-dimensional persistence module. We extend this result to the following: If $M$ is any finitely presented $(n-1)$-dimensional persistence module with finite support, then there exists an indecomposable $n$-dimensional persistence module $M'$ such that $M$ is the restriction of $M'$ to a hyperplane. We also show that any finite zigzag persistence module is the restriction of some indecomposable $3$-dimensional persistence module to a path.
\end{abstract}
\maketitle
\section{Introduction}

Understanding the structure of multiparameter persistence modules is an important foundational problem in the area of persistent homology. Every finite multiparameter persistence module has a decomposition into indecomposable persistence modules. Thus, to understand the structure of all multiparameter persistence modules, it suffices to understand the structure of the indecomposables. But the structure of the indecomposable $n$-dimensional persistence modules is rich when $n>1$; such modules have no complete discrete invariant [CZ]. One might still hope that finite indecomposeable $n$D modules would have simple 'substructures' in some sense. For example, it would be convenient if the hyperplane restrictions of such modules are in some restricted set of $(n-1)$-dimensional persistence modules. Buchet and Escolar proved that this is not true when $n=2$; any finite $1$-dimensional persistence module is a hyperplane restriction of an indecomposeable $2$D persistence module (Theorem 1.1 of [BE1]). Their work was motivated largely by [LW], which introduced the software RIVET which is an important tool for studying $2$D persistence modules. In particular, one of RIVET's functions is the ability to find the barcode of any path in a finite $2$D persistence module. 

Buchet and Escolar further found that any finite rectangular $n$D persistence module $M$ is a hyperplane restriction of some indecomposeable $(n+1)$ dimensional persistence module $M'$ (Theorem 7.7 of [BE1]). To prove this result, Buchet and Escolar provided three constructions for $M'$. In this paper, we extend their result to further shed light on how rich the structure of $(n+1)$-dimensional persistence modules can be. In particular, our primary results are as follows:

\begin{itemize}
\item{\textbf{Theorem 3.1} Let $M$ be a finite $n$-dimensional interval persistence module. Then there exists an indecomposable $(n+1)$-dimensional persistence module $M'$ such that $M$ is the restriction of $M'$ to a hyperplane. }

 \item{\textbf{Corollary 3.2} Let $M=\bigoplus\limits_{i=1}^m K[\alpha_i, \beta_i]$ be a finite zigzag persistence module. Then there exists an indecomposable $3$-dimensional persistence module $M'$ such that $M$ is the restriction of $M'$ to a path.}

\item{\textbf{Theorem 3.3} Let $M$ be any finite $n$-dimensional persistence module. Then there exists an indecomposable $(n+1)$-dimensional persistence module $M'$ such that $M$ is the restriction of $M'$ to a hyperplane. }
\end{itemize}

Though Theorem 3.3 is a direct generalization of Theorem 3.1, we state and prove these results separately because we have three constructions for $M'$ in the setting of Theorem 3.1 (based on Buchet and Escolar's three constructions for finite rectangular $n$D persistence modules), but only one of these constructions can be adapted to the setting of Theorem 3.3.

The structure of the paper is as follows: Sections 2.1 and 2.2 cover the basic definitions needed from multipersistence, as well as an important lemma about the homomorphisms between interval multiparameter persistence modules. In Section 2.3, we discuss the classification of $n$D persistence modules and state the constructions/proof for Theorem 7.7 of [BE1], as our results rely heavily on these. In Section 3, we provide constructions and proofs for our results stated above. In Section 4, we discuss potential future directions for research.

\section{Background and Previous Results}

\subsection{Initial Definitions}

Let $\{e_j\}_{j=1}^n$ denote the standard basis of $\mathbb{N}^n$. An \textbf{$n$-dimensional persistence module $M$} over a field $ K$ is an assignment of a vector space $M_\alpha$ to each $\alpha\in\mathbb{N}^n$ and a homomorphism $\phi^M_{\alpha, \alpha+e_j}:M_\alpha\rightarrow M_{\alpha+e_j}$ for each $\alpha\in\mathbb{N}^n$ and each $j\in[1,n]$ such that the resulting diagram commutes. If $\alpha=(\alpha_1, \alpha_2,..., \alpha_n)$ and $\beta=(\beta_1, \beta_2,...,\beta_n)$ are points in $\mathbb{N}^n$ such that $\alpha_j\leq \beta_j$ for all $j$, then we write $\alpha\leq\beta$. This yields a partial ordering on $\mathbb{N}^n$. Notice that  definition of an $n$-dimensional persistence module $M$ implies that there is a well-defined map $\phi_{\alpha,\beta}^M:M_\alpha\rightarrow M_\beta$ whenever $\alpha\leq\beta$.

A \textbf{homomorphism} $f:M\rightarrow N$ between $n$-dimensional persistence modules $M$ and $N$ is a collection of homomorphisms $f_\alpha : M_\alpha\rightarrow N_\alpha$ such that $\phi^N_{\alpha, \alpha+e_j}\circ f_\alpha = f_{\alpha+e_j} \circ \phi^M_{\alpha, \alpha+e_j}$ for all $\alpha\in\mathbb{N}^n$ and all $j\in [1,n]$.

Let $V=\{v_i\}_i$ be a set of homogeneous elements in $M$ (meaning that for each $i$, $v_i\in M_{\alpha_i}$ for some $\alpha_i\in\mathbb{N}^n$). We say that $V$ is a \textbf{generating set} of $M$ if for all $\beta\in\mathbb{N}^n$ and $v\in M_\mathbb{\beta}$, there exist coefficients $c_i\in K$ such that $v=\bigoplus\limits_i c_i\phi_{\alpha_i,\beta}^M(v_i)$. If $M$ has a generating set $V$ with finite cardinality, then we say that $M$ is \textbf{finitely generated}. 
All of the $n$D persistence modules in this paper are assumed to be \textbf{finite}, meaning they are finitely presented and have finite support (where the \textbf{support} of $M$, denoted by $\Supp(M)$, is defined to be $\{\alpha\in\mathbb{N}^n| M_\alpha\neq 0\}$). 

There is a bijective correspondence between finitely generated $n$-dimensional persistence modules $M$ and finitely generated modules over the polynomial ring $ K[x_1, x_2,..., x_n]$; given a  finitely generated $n$-dimensional persistence module $M$, define the finitely generated module $M':=\bigoplus\limits_\alpha M_\alpha$ over $ K[x_1, x_2,..., x_n]$ with $ K[x_1, x_2, \cdots, x_n]$-action given by $x_j \cdot v:= \phi_{\alpha, \alpha+e_j} (v)$ for all $v\in M_\alpha$ and $j\in[1,n]$. This bijection is in fact an equivalence of categories [CZ], which allows us to utilize definitions and results from commutative algebra.

The \textbf{direct sum} of two $n$-dimensional persistence modules $M$ and $N$ is the $n$-dimensional persistence module $M\bigoplus N$, whose vector spaces are given by $(M\bigoplus N)_\alpha:=M_\alpha\bigoplus N_\alpha$ and whose maps are given by $\phi^{M\bigoplus N}_{\alpha,\beta}=\phi^{M}_{\alpha,\beta}\bigoplus \phi^{N}_{\alpha,\beta}$ for all $\alpha\leq\beta\in\mathbb{N}^n$. An $n$-dimensional persistence module $M$ is \textbf{indecomposable} if $M$ cannot be written as $M=M_1\bigoplus M_2$ where $M_1,M_2$ are both nonzero $n$-dimensional persistence modules. By the Krull-Schmidt Theorem, any finite $n$D persistence module $M$ has an \textbf{indecomposable decomposition}, meaning there exist indecomposable $n$-dimensional persistence modules $I_i$ such that $M=\bigoplus\limits_{i=1}^m I_i$. The following lemma will be important for proving our main results:
\begin{lemma}[Corollary 4.8 of {[A]}]
Let $M$ be a finite $n$-dimensional persistence module. Then $\End(M)$ is local if and only if $M$ is indecomposable.
\end{lemma}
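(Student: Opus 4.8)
The plan is to combine the standard characterization of local rings (for every $r$, either $r$ or $\Id-r$ is a unit; equivalently, the non-units form a two-sided ideal) with Fitting's lemma, the only nontrivial input being a finiteness statement. First I would record that key input: since $M$ is finite, i.e.\ finitely presented with finite support, the vector space $\bigoplus_\alpha M_\alpha$ is finite-dimensional over $K$; any $f\in\End(M)$ is determined by its components $f_\alpha\colon M_\alpha\to M_\alpha$, so $\End(M)$ embeds into $\prod_\alpha\End_K(M_\alpha)$ and is therefore a finite-dimensional $K$-algebra. This is what makes the ascending/descending chain conditions used below available.

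For the direction ``$\End(M)$ local $\Rightarrow M$ indecomposable'' I would argue by contraposition. If $M=M_1\oplus M_2$ with both summands nonzero, then the projection $e$ onto $M_1$ along $M_2$ is an idempotent of $\End(M)$ with $e\notin\{0,\Id\}$. In a local ring, for the element $e$ either $e$ or $\Id-e$ is a unit; since $e(\Id-e)=0$, a unit among $\{e,\Id-e\}$ must equal $\Id$ (a unit idempotent is the identity), forcing $e\in\{0,\Id\}$, a contradiction. Hence $\End(M)$ is not local.

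For the converse — the substantive direction — suppose $M$ is indecomposable and fix $f\in\End(M)$. Kernels and images of morphisms of persistence modules are again (sub- or quotient) persistence modules, so the chains $\ker f\subseteq\ker f^2\subseteq\cdots$ and $\operatorname{im}f\supseteq\operatorname{im}f^2\supseteq\cdots$ stabilize by the finite-dimensionality of $\bigoplus_\alpha M_\alpha$; Fitting's lemma then yields a splitting $M=\ker(f^N)\oplus\operatorname{im}(f^N)$ for $N$ large. Indecomposability forces one summand to vanish: either $\operatorname{im}(f^N)=0$, so $f$ is nilpotent, or $\ker(f^N)=0$, in which case $f$ is injective, hence componentwise bijective (each $M_\alpha$ is finite-dimensional), hence an automorphism of $M$. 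Thus every endomorphism of $M$ is either nilpotent or invertible. It remains to show the set $\mathfrak n$ of non-invertible endomorphisms is a two-sided ideal, whence it is the unique maximal ideal and $\End(M)$ is local. It absorbs composition with arbitrary $g\in\End(M)$: if $f\in\mathfrak n$ but $gf$ (resp.\ $fg$) were invertible, then $f$ would be injective (resp.\ surjective), hence an automorphism, a contradiction. It is closed under addition: if $f,g\in\mathfrak n$ but $f+g$ is invertible, write $\Id=(f+g)^{-1}f+(f+g)^{-1}g=:f'+g'$; by the previous point $f',g'\in\mathfrak n$ are nilpotent, but then $f'=\Id-g'$ is invertible with inverse $\sum_{k\ge 0}(g')^k$ (a finite sum since $g'$ is nilpotent), contradicting $f'\in\mathfrak n$.

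I expect no deep obstacle here: the entire content is in having Fitting's lemma at one's disposal, which rests on the hypothesis ``finite'' translating into finite-dimensionality of $\bigoplus_\alpha M_\alpha$ and on kernels and images being honest subobjects in the category of persistence modules; the remainder is the classical local-endomorphism-ring argument. (One could alternatively invoke the structure theory of finite-dimensional algebras — such an algebra is local precisely when it has no nontrivial idempotents — but the Fitting argument keeps the proof self-contained.)
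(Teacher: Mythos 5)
Your proof is correct. There is nothing in the paper to compare it against: the lemma is stated with a citation (Corollary 4.8 of [A]) and no proof is given, so the relevant benchmark is the standard textbook argument, which is exactly what you have reproduced — contraposition via a nontrivial idempotent (a unit idempotent equals $\Id$) for one direction, and Fitting's lemma plus the ``every endomorphism is nilpotent or invertible'' dichotomy for the other. You also correctly isolated the one place where the paper's hypotheses enter: ``finite'' (finitely presented with finite support) makes $\bigoplus_\alpha M_\alpha$ a finite-dimensional $K$-vector space, hence $\End(M)$ a finite-dimensional $K$-algebra, which is what licenses the stabilization of the kernel and image chains. The remaining steps (kernels and images are persistence submodules because the category is equivalent to a module category; an injective or surjective endomorphism is componentwise bijective and its componentwise inverses assemble into an inverse homomorphism; non-units absorb products and sums via the geometric-series trick) all check out.
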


\subsection{Interval $n$-dimensional Persistence Modules}

Let $M=\bigoplus\limits_{i=1}^m I_i$ be the decomposition of an $n$-dimensional persistence module $M$ into its indecomposable summands. Then $M$ is called an \textbf{interval} $n$-dimensional persistence module if $\dim((I_i)_\alpha)\in\{0,1\}$ for all $\alpha\in\mathbb{N}^n$ and 
\[   \phi^{I_i}_{\alpha,\beta}=\left\{
\begin{array}{ll}
      \Id_ K & \text{if }  \alpha\leq\beta \text{ and } (I_i)_\alpha=(I_i)_\beta= K \\
      0 & \text{otherwise}. \\
 
\end{array}
\right. \] Interval $n$D persistence modules are some of the simplest multiparameter persistence modules, but they will be fundamental to proving the results in Sections 2.3 and 3. Thus, we use this section to explore the properties of such modules. For additional properties of interval multiparameter persistence modules, see [ABENY, AENY, DX].

\begin{remark}
Let $I$ be an indecomposable $n$-dimensional interval persistence module. We claim that $\alpha,\beta \in \Supp(I)$ implies $\gamma\in\Supp(I)$ whenever $\alpha\leq\gamma\leq\beta$. Indeed, $\alpha,\beta \in \Supp(I)$ implies that $\phi^{I}_{\alpha,\beta}=\Id_ K$ by above. Let $\gamma\in\mathbb{N}^n$ such that $\alpha\leq\gamma\leq\beta$. Then $\Id_ K=\phi^{I}_{\alpha,\beta}=\phi^{I}_{\gamma,\beta}\circ\phi^{I}_{\alpha,\gamma}$ since the $\phi^{I}$ maps must commute with each other. Thus $\phi^{I}_{\alpha,\gamma}\neq 0$, implying (by the definition of interval $n$D persistence modules) that $\gamma\in \Supp(I)$.
\end{remark}

An indecomposable $n$D interval persistence module $I$ is \textbf{rectangular} if its support is an $m$-dimensional box for some $m\leq n$. Such modules are denoted by $ K[\alpha, \beta]$ where $\alpha, \beta\in \Supp (I)$ such that $\alpha\leq\gamma\leq\beta$ for all $\gamma\in\Supp (I)$. An $n$-dimensional persistence module $M$ is \textbf{rectangular} if each of its indecomposeable summands is.

Suppose $M$ and $N$ are finite $n$-dimensional interval persistence modules. Let $C_i$ be an edge-connected component of $\Supp(M)\cap\Supp(N)$. Call $C_i$ \textbf{non-viable} if there exists $\beta\in\Supp(M)\setminus \Supp(N)$ and $\alpha\in C_i$ such that $\beta<\alpha$, or if  there exists $\beta\in\Supp(N)\setminus\Supp(M)$ and $\alpha\in C_i$ such that $\alpha<\beta$. Otherwise, call $C_i$ \textbf{viable}. For an example of these concepts, see Fig. \ref{fig:Viable}.

\begin{figure}[h]
    \centering
    \includegraphics[scale=.24]{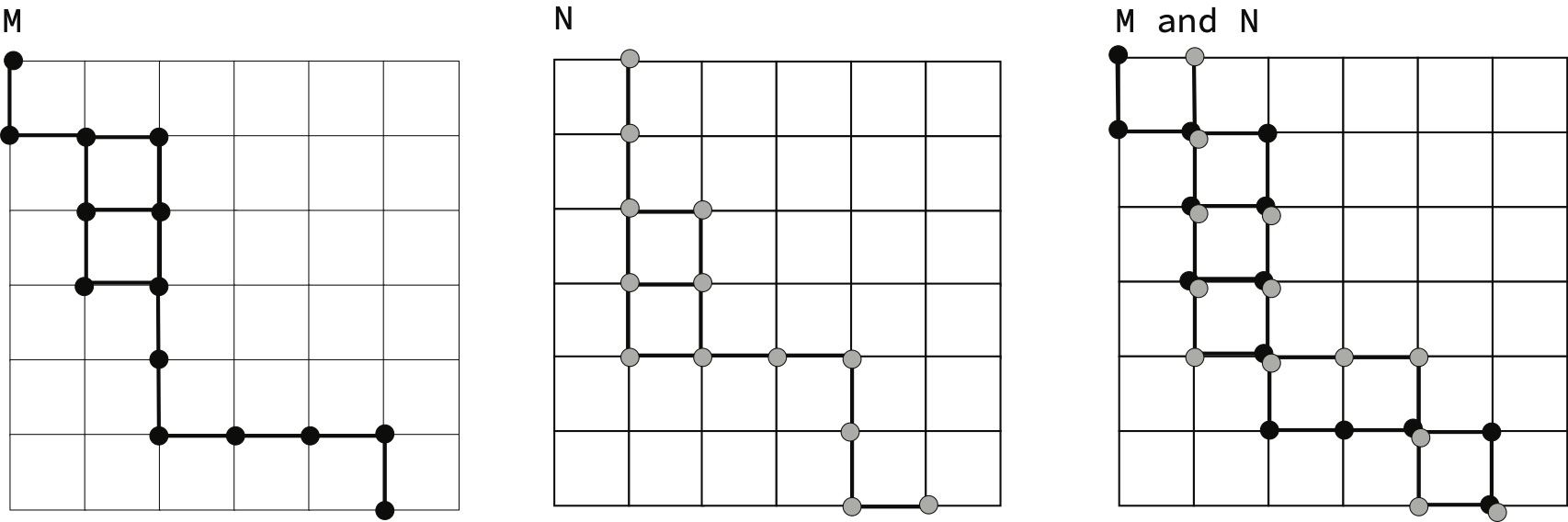}
    \vspace{.2cm}
    \caption{The first two images in this figure are $2$-dimensional interval persistence modules $M$ and $N$. In each module, each dot at $\alpha\in\mathbb{N}^2$ represents a basis element of the $\alpha$-vector space. Thick edges represent the identity map on colored components (i.e. gray basis elements map to gray basis elements while black basis elements map to black basis elements.), while the thinner edges represent the zero map. The third image shows $M$ and $N$ overlaid, which makes it clear that there are three components in $\Supp(M)\cap\Supp(N)$. Of these components, only the rightmost is viable. The leftmost component $C$ is non-viable because $(1,6)\in\Supp(N)\setminus\Supp(M)$ and $(1,5)\in C$ have the property that  $(1,5)<(1,6)$. Meanwhile, the middle component $C$ is non-viable because $(2,1)\in\Supp(M)\setminus \Supp(N)$ and $(2,2)\in C$ have the property that $(2,1)<(2,2)$.}
    \label{fig:Viable}
\end{figure}

\begin{lemma}
Suppose $M$ and $N$ are finite $n$-dimensional indecomposeable interval persistence modules. Let $m$ denote the number of viable components of $Supp(M)\cap\Supp(N)$.  Then \[\Hom(M,N)\cong\left\{
\begin{array}{ll}
     K^m & \text{if } m>0 \\
      0 & \text{otherwise}. \\
 
\end{array}
\right. \]
\end{lemma}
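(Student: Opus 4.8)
The plan is to construct an explicit isomorphism $\Hom(M,N) \cong K^m$ by showing that any homomorphism $f\colon M \to N$ is determined by a scalar on each viable component, and that these scalars can be chosen freely and independently. First I would set up notation: since $M$ and $N$ are indecomposable interval modules, for each $\alpha \in \Supp(M)\cap\Supp(N)$ both $M_\alpha$ and $N_\alpha$ are one-dimensional, so $f_\alpha$ is multiplication by a scalar $c_\alpha \in K$; for $\alpha \notin \Supp(M)\cap\Supp(N)$, the map $f_\alpha$ is forced to be zero (its domain or codomain is $0$). So a homomorphism is the same data as a function $\alpha \mapsto c_\alpha$ on $\Supp(M)\cap\Supp(N)$, and the only content is the commutativity constraint.

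Next I would analyze the commutativity constraint along edges. Suppose $\alpha, \alpha + e_j$ are both in $\Supp(M)\cap\Supp(N)$; then (using that $M,N$ are interval modules so the internal maps on this set are identities by the Remark) the square forces $c_\alpha = c_{\alpha+e_j}$. Hence $c$ is constant on each edge-connected component of $\Supp(M)\cap\Supp(N)$, which shows $\Hom(M,N)$ embeds into $K^{(\text{number of components})}$. The remaining work is to determine which locally-constant assignments actually extend to a well-defined homomorphism of the whole modules — i.e., which components may carry a nonzero scalar. This is exactly where viability enters: if $C_i$ is non-viable via some $\beta \in \Supp(M)\setminus\Supp(N)$ with $\beta < \alpha \in C_i$, then in $M$ the map $\phi^M_{\beta,\alpha}$ is an isomorphism $K \to K$, so $f_\alpha \circ \phi^M_{\beta,\alpha} = \phi^N_{\beta,\alpha}\circ f_\beta = 0$ (since $N_\beta = 0$), forcing $c_\alpha = 0$; the symmetric argument handles the case $\alpha < \beta$ with $\beta \in \Supp(N)\setminus\Supp(M)$, using that $\phi^N_{\alpha,\beta}$ is an isomorphism and $M_\beta = 0$. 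So every non-viable component is forced to carry scalar $0$, cutting the dimension down to at most $m$.

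The main obstacle — and the step I would spend the most care on — is the converse: showing that an assignment which is zero on non-viable components, constant on each viable component, and zero off $\Supp(M)\cap\Supp(N)$ genuinely defines a homomorphism. One must verify commutativity of every square $\phi^N_{\alpha,\alpha+e_j}\circ f_\alpha = f_{\alpha+e_j}\circ\phi^M_{\alpha,\alpha+e_j}$. The cases where $\alpha,\alpha+e_j$ lie in the same component, or where both sides are visibly zero, are immediate. The delicate cases are the "boundary" edges: $\alpha \in \Supp(M)\cap\Supp(N)$ but $\alpha + e_j \notin$, or vice versa, and edges joining two distinct components of the intersection. For such an edge one must show that the nonzero side is in fact zero — and this is where viability must be invoked: if the square failed to commute it would be because some $c_\alpha \neq 0$ on a component that is "comparable past the boundary" to a point in $\Supp(M)\triangle\Supp(N)$, contradicting that the component is viable (or that non-viable components were set to zero). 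I would package this by arguing that the $\phi^M$ and $\phi^N$ maps restricted to a neighborhood of the intersection, together with the definition of viability, leave no room for a commutativity failure; the Remark (that supports of interval modules are "convex" under $\leq$ on segments) is the key tool for controlling which points of $\Supp(M)$ and $\Supp(N)$ are forced to appear. Finally, I would observe that distinct viable components are handled independently, so the map sending $f$ to its tuple of scalars on the viable components is a linear bijection onto $K^m$, completing the proof; the $m=0$ case then says $\Hom(M,N) = 0$.
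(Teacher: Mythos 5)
Your proposal is correct and follows essentially the same route as the paper: represent $f$ by scalars $c_\alpha$ on $\Supp(M)\cap\Supp(N)$, show constancy on each edge-connected component, force $c_i=0$ on non-viable components via the identity map out of (or into) a point of $\Supp(M)\setminus\Supp(N)$ (resp.\ $\Supp(N)\setminus\Supp(M)$), and use viability to see the remaining assignments define genuine homomorphisms. The one step you describe rather than carry out --- commutativity on the boundary edges --- is settled in the paper by the short observation that for $\alpha$ in a viable component $C_i$, the point $\alpha+e_j$ is either again in $C_i$ or lies outside $\Supp(N)$, so in the latter case both compositions in the square have codomain $0$.
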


\begin{proof}
Let $\{C_i\}_i$ denote the set of (possibly non-viable) components of $\Supp(M)\cap\Supp(N)$ and suppose $f\in\Hom(M,N)$. Then $f|_\alpha$ is trivially zero whenever $\alpha\notin\Supp(M)\cap\Supp(N)$. On the other hand, $f|_\alpha=c_\alpha \Id_ K$ for some $c_\alpha\in K$ whenever $\alpha\in\Supp(M)\cap\Supp(N)$, as $M_\alpha= N_\alpha= K$. Furthermore, because $f$ must commute with the maps $\phi^M$ and $\phi^N$, it must be the case that $c_\alpha=c_\beta$ whenever $\alpha,\beta$ are in the same component of $\Supp(M)\cap\Supp(N)$. That is, $f$ is fully determined by $\{c_i\in K\}_{i}$, where $c_i:=c_\alpha$ if $\alpha\in C_i$.

We claim that if $C_i$ is non-viable, then $c_i$ must be zero. That is, for all $\alpha\in C_i$, $f|_\alpha$ is the zero map. Suppose there exists $\beta\in\Supp(M)\setminus\Supp(N)$ and $\alpha\in C_i$ such that $\beta<\alpha$. Then $\beta\notin \Supp(N)$ implies $f|_{\beta}$ is the zero map. Because $f$ must commute with the maps $\phi^M, \phi^N$, it must be the case that $f|_\alpha\circ \phi^M_{\beta,\alpha}=\phi^N_{\beta,\alpha}\circ f|_{\beta}=0$. We know that $\phi^M_{\beta,\alpha}=\Id_{ K}$ since $M$ is an indecomposable interval $n$-dimensional persistence module. Thus it must be the case that $f|_\alpha=0$. Similarly, if there exists $\beta\in\Supp(N)\setminus\Supp(M)$ and $\alpha\in C_i$ such that $\alpha<\beta$, then $M_\beta=0$ implies $f|_\beta=0$. Commutativity of the $f, \phi^M$, and $\phi^N$ maps yields that $f|_\alpha=f|_\beta=0$.

Alternatively, if $C_i$ is viable, then we claim that the $f, \phi^M$, and $\phi^N$ maps are able to commute even if $c_i$ is nonzero. Let $\alpha\in C_i$. Notice that $\alpha+e_j$ must be an element of $C_i$ or $\mathbb{N}^n\setminus\Supp(N)$ since $C_i$ is viable. If $\alpha+e_j \in C_i$ then $f|_{\alpha+ej}\circ \phi^M_{\alpha,\alpha+e_j}=c_i \Id_ K= \phi^N_{\alpha,\alpha+e_j}\circ f|_{\alpha}$, as desired. On the other hand, if $\alpha+e_j \in \mathbb{N}^n\setminus\Supp(N)$, then $f|_{\alpha+ej}\circ \phi^M_{\alpha,\alpha+e_j}$ and $\phi^N_{\alpha,\alpha+e_j}\circ f|_{\alpha}$ are maps with codomain equal to zero, so both compositions are trivially the zero map.

Thus $c_i$ may be nonzero whenever $C_i$ is a viable component, implying that any $f\in\Hom(M,N)$ is determined by $\{c_i\in K| C_i \text{ is viable and } f|_\alpha=c_i \Id_ K \text{ fo all }\alpha\in C_i\}$.
\end{proof}

We will primarily work with pairs of indecomposable interval $n$-dimensional persistence modules $I$ and $J$ such that $\Hom(I,J)\cong  K$ or $0$. In the setting that $\Hom(I,J)\cong K$, we use the notation $f_{I}^{J}$ to indicate a fixed natural nonzero homomorphism from $I$ to $J$.

\subsection{The Structure of Indecomposeables}

Recall that every finite $n$D persistence module has an indecomposable decomposition. Thus, to understand the structure all $n$-dimensional persistence modules, it suffices to understand the set of indecomposable $n$-dimensional persistence modules. When $n=1$, the indecomposeable persistence modules are of a simple form; $M$ is indecomposable if and only if there is an interval $[\alpha,\beta]$ (with $\beta$ possibly infinite) such that
\[   \dim(M_\gamma)=\left\{
\begin{array}{ll}
    1 & \text{if }  \alpha\leq\gamma\leq\beta  \\
      0 & \text{otherwise}\\
 
\end{array}
\right. \]

and \[   \phi^{M}_{\gamma,\delta}=\left\{
\begin{array}{ll}
      \Id_ K & \text{if }  \alpha\leq\gamma\leq\delta\leq\beta \\
      0 & \text{otherwise} \\
 
\end{array}
\right. \] [G]. That is, every indecomposable $1$-dimensional persistence module is rectangular. Unfortunately, for $n>1$, the full set of indecomposable $n$-dimensional persistence modules do not have such a simple structure.

\begin{theorem}[CZ] There does not exist a complete discrete invariant for the set of indecomposable $n$-dimensional persistence modules whenever $n>1$.
\end{theorem}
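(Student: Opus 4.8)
The plan is to show that already for $n=2$ the collection of indecomposable finite persistence modules ``varies continuously'', so that it cannot be indexed by a discrete set of invariants, and then to bootstrap this to every $n>2$. Concretely, I would exhibit a family $\{M_\lambda\}_{\lambda\in K}$ of pairwise non-isomorphic indecomposable finite $2$-dimensional persistence modules, all with the same support and the same dimension vector $\alpha\mapsto\dim (M_\lambda)_\alpha$, whose isomorphism type is controlled by a continuous parameter. A complete discrete invariant would then inject the isomorphism classes $\{[M_\lambda]:\lambda\in K\}$ into a discrete set --- impossible when $K$ is uncountable --- and, more robustly, would have to separate the members of a one-parameter algebraic family while taking values in a discrete set, which is what is meant by saying the classification is ``wild''. (A little care is needed when $K$ is finite, since then there are only countably many finite modules in total; following [CZ] the statement is to be read in the tame-versus-wild sense, namely that the problem contains the classification of $\mathbb{Z}^2$-graded $K[x,y]$-modules, a wild problem.)

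For the family itself, I would realize a configuration of four lines in $K^2$ inside a $2$-dimensional grid. First I would place $V=K^2$ at an interior lattice point $\gamma$, arrange the images of the two incoming structure maps (from $\gamma-e_1$ and $\gamma-e_2$) to be prescribed lines $L_1(\lambda),L_2(\lambda)\subseteq V$, and the kernels of the two outgoing structure maps (to $\gamma+e_1$ and $\gamma+e_2$) to be prescribed lines $L_3(\lambda),L_4(\lambda)\subseteq V$. The remaining support points must be chosen spread out enough that no commutativity relation of the grid forces a relation among the $L_i$; this is the delicate point, since a short lattice path joining two support points and passing through a zero space would force a composite of structure maps to vanish and thereby constrain the lines. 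With the layout arranged correctly, all four lines are free, and I would choose them so that their cross-ratio realizes each element of $K\cup\{\infty\}$ as $\lambda$ varies. Then $M_\lambda$ is indecomposable: a nontrivial direct-sum decomposition would produce a nontrivial idempotent $f\in\End(M_\lambda)$ whose $\gamma$-component $f_\gamma$ is a nontrivial idempotent of $V$ preserving all four of $L_1(\lambda),\dots,L_4(\lambda)$, which is impossible for four lines in general position (a rank-one idempotent of $K^2$ leaves invariant only its image and its kernel); equivalently, $\End(M_\lambda)$ is local, as in Lemma 2.1. And any isomorphism $M_\lambda\to M_\mu$ restricts to a linear automorphism of $K^2$ sending $L_i(\lambda)$ to $L_i(\mu)$ for each $i$, which exists only when the cross-ratios agree; hence the $M_\lambda$ are pairwise non-isomorphic.

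To pass from $n=2$ to general $n>2$, I would embed $\mathbb{N}^2$ as the coordinate plane $\{\alpha_3=\dots=\alpha_n=0\}$: set $(M'_\lambda)_{(\alpha_1,\dots,\alpha_n)}:=(M_\lambda)_{(\alpha_1,\alpha_2)}$ when $\alpha_3=\dots=\alpha_n=0$, set it to $0$ otherwise, keep the $e_1$- and $e_2$-direction maps from $M_\lambda$, and make all other structure maps zero. Then $M'_\lambda$ is finite, $\End(M'_\lambda)\cong\End(M_\lambda)$ is local so $M'_\lambda$ is indecomposable, and $M'_\lambda\cong M'_\mu$ forces $M_\lambda\cong M_\mu$ and hence $\lambda=\mu$; so $\{M'_\lambda\}_{\lambda\in K}$ is again a one-parameter family of pairwise non-isomorphic indecomposables, and the argument of the first paragraph applies verbatim. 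I expect the main obstacle to be the construction in the second paragraph, namely getting the combinatorics of $\Supp(M_\lambda)$ right so that the four lines are genuinely unconstrained and then verifying indecomposability; a secondary, more conceptual obstacle, inherited from [CZ], is making ``no complete discrete invariant'' precise via the tame/wild dichotomy rather than through one clean formal assertion.
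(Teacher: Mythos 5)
Your strategy is exactly the one the paper attributes to [CZ]: the theorem is quoted there without proof, with the remark that it follows from exhibiting a continuously parameterized family of pairwise non-isomorphic indecomposable modules, which is precisely what your four-lines/cross-ratio plan (and the coordinate-plane embedding for $n>2$) sets out to do. A concrete such family already appears in the paper's Figure~\ref{fig:IndecomposableExample} (the $J_d(\lambda)$ family of [BE2]), so your remaining obstacles --- arranging the support so the four lines are unconstrained, and propagating the idempotent argument from the special vertex to the whole module --- can be bypassed by citing or reproducing that example.
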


In other words, there does not exist a finitely parameterized invariant that can distinguish between all indecomposeable $n$-dimensional persistence modules. This was proved in [CZ] by showing the existence of a continuously parameterized family of non-isomorphic indecomposable $n$-dimensional persistence modules. Buchet and Escolar have found other continuously parameterized families of non-isomorphic indecomposable $n$D persistence modules, such as the family shown in Fig. \ref{fig:IndecomposableExample} [BE2]. The following theorem gives insight to how complicated the structure of indecomposable $(n+1)$-dimensional persistence modules can be.

\begin{figure}[h]
    \centering
    \includegraphics[scale=.4]{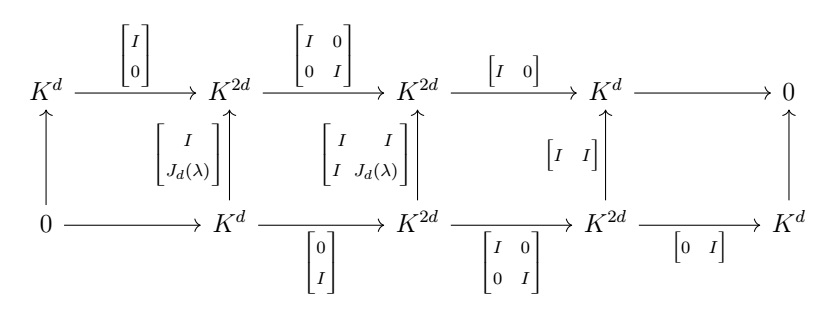}
    \vspace{.2cm}
    \caption{For each choice of $d\in\mathbb{N}$, shown is a continuous family of non-isomorphic indecomposeable $2$-dimensional persistence modules, where continuity comes from allowing $\lambda\in K=\mathbb{R}$ to vary. The maps $\phi^M$ are written in matrix form with respect to the bases $ K[1,4]^d\bigoplus K[2,3]^d$ of layer $0$ and $ K[0,3]^d\bigoplus K[1,2]^d$ of layer $1$. The matrix denoted by $J_{d}(\lambda)$ is the $d\times d$ Jordan block with $\lambda$ along its diagonal. This family was introduced in [BE2].}
    \label{fig:IndecomposableExample}
\end{figure}

\begin{theorem}[BE1]
Let $M=\bigoplus\limits_{i=1}^m K[\alpha_i, \beta_i]$ be a finite $n$-dimensional rectangular persistence module. Then there exists an indecomposeable finite $(n+1)$-dimensional persistence module $M'$ such that $M$ is a hyperplane restriction of $M'$.
\end{theorem}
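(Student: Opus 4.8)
The plan is to build $M'$ from the given rectangular module $M = \bigoplus_{i=1}^m K[\alpha_i,\beta_i]$ by stacking two copies of (modifications of) $M$ on two parallel hyperplanes and gluing them together with a single indecomposable ``mixing'' module that forces the endomorphism ring to be local. Concretely, identify the hyperplane $H \cong \mathbb{N}^n$ with $\{x_{n+1} = 0\} \subset \mathbb{N}^{n+1}$, and let the layer $x_{n+1}=0$ of $M'$ be exactly $M$ (up to an overall shift that leaves room to work). On the next layer $x_{n+1}=1$ I would place a carefully chosen interval module --- or a direct sum of rectangular interval modules --- whose support overlaps each summand $K[\alpha_i,\beta_i]$ in a way that links the summands in a single ``chain,'' so that any endomorphism of $M'$ must act by the same scalar on every summand. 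This is exactly the mechanism that Lemma 2.5 (the $\Hom$ computation for interval modules via viable components) is set up to control: by arranging the overlaps to be viable and connected, one forces the scalars $c_i$ to agree.

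The key steps, in order, are: (1) Recall Buchet--Escolar's three constructions (referenced earlier as the proof of Theorem 2.7 / Theorem 7.7 of [BE1]) and fix one of them as the template; the natural choice is the construction that adds one new coordinate direction and uses a ``ladder'' of shifted copies of the rectangles plus connecting intervals. (2) Write down $M'$ explicitly as an $(n+1)$-dimensional persistence module: specify its value at each $\alpha \in \mathbb{N}^{n+1}$, layer by layer in the $x_{n+1}$-coordinate, and specify all the structure maps, checking commutativity of the resulting diagram (routine but needs care at the gluing layer). (3) Verify that restricting $M'$ to the hyperplane $\{x_{n+1} = c\}$ for the appropriate constant $c$ recovers $M$ on the nose (again essentially by construction). (4) Prove $M'$ is indecomposable: by Lemma 2.4 it suffices to show $\End(M')$ is local, and since $M'$ is built from interval pieces, I would decompose an arbitrary $f \in \End(M')$ into its block components between the interval summands, apply Lemma 2.5 to see each block is a scalar supported on viable components, and then use the connectivity/viability of the chosen overlap pattern to conclude that all these scalars coincide; hence $f = c\cdot\Id + (\text{nilpotent})$ and $\End(M')$ is local.

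The main obstacle is step (4): designing the connecting intervals on the upper layer so that (a) every pair of ``adjacent'' rectangular summands $K[\alpha_i,\beta_i]$, $K[\alpha_{i+1},\beta_{i+1}]$ is genuinely linked --- i.e. the relevant component of the intersection of supports is viable, not non-viable --- while simultaneously (b) no \emph{unwanted} viable components are created that would give extra, non-scalar endomorphisms, and (c) the whole configuration still glues into a legitimate commuting $(n+1)$-dimensional module. Getting all three conditions to hold at once is where Buchet and Escolar's specific constructions earn their keep, and the argument here will amount to checking that their linking pattern, suitably shifted into the extra dimension, continues to satisfy (a)--(c) for the collection $\{K[\alpha_i,\beta_i]\}$. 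Once the overlap combinatorics is pinned down, the indecomposability argument is a direct application of Lemmas 2.4 and 2.5, and the hyperplane-restriction claim is immediate from the construction.
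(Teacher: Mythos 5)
Your high-level strategy is the right one and matches the paper's: build $M'$ in layers along the new coordinate, compute $\Hom$ between the interval/rectangular summands via the viable-component lemma, and force every endomorphism to be a single scalar so that $\End(M')$ is local. But the proposal has a genuine gap: the construction itself is never specified, and the hard part of the theorem \emph{is} the construction. The mechanism you sketch --- a single ``mixing'' module placed on the layer directly adjacent to $M$, or a ``chain'' of overlaps linking adjacent summands pairwise --- fails in general. For rectangles one has $\Hom(K[\alpha_1,\beta_1],K[\alpha_2,\beta_2])\cong K$ only when $\alpha_2\leq\alpha_1\leq\beta_2\leq\beta_1$; so a single rectangle $K[\alpha,\beta]$ receiving nonzero maps from every summand would need $\max_i\alpha_i\leq\beta\leq\min_i\beta_i$, which is impossible as soon as the supports of the $K[\alpha_i,\beta_i]$ are, say, disjoint. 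This is precisely why the paper's (i.e.\ Buchet--Escolar's) dual construction inserts \emph{two} intermediate layers before the terminal single rectangle: first each $K[\alpha_i,\beta_i]$ is replaced by $K[\alpha_i',\beta_i]$ with the $\alpha_i'$ pushed far enough down that $\alpha_i'\leq\frac{\alpha_j'+\beta_j}{2}$ for all $i,j$, then by $K[\alpha_i',\beta_i']$ with $\beta_i'=\mu-\alpha_i'$ so that the supports become strictly nested, and only then can everything be funneled into the single rectangle $K[\min_i\alpha_i',\min_i\beta_i']$. Your step (1) defers exactly this choice to ``Buchet--Escolar's template,'' which is the content you were asked to supply.

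The second place the proposal is too thin is your own identified obstacle (b): killing the off-diagonal blocks of an endomorphism. Forcing the diagonal scalars to agree is the easy half (it follows from commuting past the single terminal rectangle). The delicate half is showing that a nonzero off-diagonal block $g_{1j}^{1k}:I_{1j}\to I_{1k}$ composes to a \emph{nonzero} map into the next layer $I_{2k}$, so that the commutativity relation forces its coefficient to vanish; this is where the strict nesting $\alpha_k'<\alpha_j'<\beta_k'\leq\beta_j$ of the intermediate layers is used, and it is the reason the corners $\alpha_i',\beta_i'$ must satisfy the specific inequalities above rather than merely ``overlap viably.'' Saying one will ``use the connectivity/viability of the chosen overlap pattern'' does not discharge this step; without the explicit inequalities there is no argument that the unwanted homomorphisms are excluded.
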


In fact, Buchet and Escolar provided 3 methods for constructing such $M'$ from $M$. We repeat these constructions and the proof of indecomposability, as our results rely heavily on such. Before we begin, we state two key facts which will allow us to easily define the maps $\phi^{M'}$. First, let $M,N$ be finitely generated $n$-dimensional persistence modules and suppose $M=\bigoplus\limits_{i=1}^m I_i^M$ and $N=\bigoplus\limits_{j=1}^p I_j^N$ are the indecomposeable summands of $M$ and $N$. Then  any homomorphism $f:M\rightarrow N$ can be summarized by a collection of maps $f_{ij}:I_i^M\rightarrow I_j^N$. Second, Lemma 2.3 implies the following for $n$-dimensional rectangular persistence modules:
\[   \Hom( K[\alpha_1,\beta_1], K[\alpha_2,\beta_2])\cong\left\{
\begin{array}{ll}
       K & \text{if }  \alpha_2\leq\alpha_1\leq\beta_2\leq\beta_1\\
      0 & \text{otherwise}. \\
 
\end{array}
\right. \]

 We call the first construction from [BE1] the \textbf{main rectangular construction}, which is as follows: Let the $j^{th}$ \textbf{layer} of an $n$-dimensional persistence module $M$ be the restriction of $M$ to the plane with final coordinate $j$. Layer $3$ of $M'$ is $M=\bigoplus\limits_{i=1}^m K[\alpha_i, \beta_i]=:\bigoplus\limits_{i=1}^m I_{3i}$. In layer $2$, place $\bigoplus\limits_{i=1}^m K[\alpha_i, \beta_i']=:\bigoplus\limits_{i=1}^m I_{2i}$ where $\{\beta_i'\}_i$ are chosen such that $\beta_i\leq\beta_i'<\beta_{i+1}'$ for all $i$ and $\frac{\alpha_i+\beta_i'}{2}\leq\beta_j'$ for all $i,j$ (where  our partial ordering on $\mathbb{N}^n$ is extended to all of $(\frac{1}{2}\mathbb{N})^n$ in the obvious way). The maps $\phi^{M'}$ between layer $2$ and layer $3$ are defined by the matrix \begin{equation*}\phi_2^{M'}=\begin{bmatrix}
f_{21}^{31} &  & &\\
 & f_{22}^{32} & &\\
 & & \ddots & \\
 & & & f_{2m}^{3m}
\end{bmatrix},\end{equation*} where $f_{ri}^{sj}$ is shorthand for $f_{I_{ri}}^{I_{sj}}$ (c.f. notation at the end of Section 2.2).

Let $\alpha^j$ denote the $j^{th}$ coordinate of $\alpha\in\mathbb{N}^n$. Define $\mu\in\mathbb{N}^n$ by $\mu^j=\max_i (\alpha_i+\beta_i')^j$. In layer $1$ of $M'$ place $\bigoplus\limits_{i=1}^m K[\alpha_i', \beta_i']=: \bigoplus\limits_{i=1}^m I_{1i}$ where $\alpha_i':= \mu-\beta_i'$. Notice that this causes $\alpha_i\leq \alpha_i'<\beta_j'$ for all $i,j$. Combining this with the fact that $\beta_i'<\beta_{i+1}'$ implies that  $\alpha_{i+1}'<\alpha_{i}'<\beta_i<\beta_{i+1}'$ for all $i$. We thus have strict inclusions $\Supp(I_{1i})\subsetneq \Supp(I_{1(i+1)})$ for all $i$, a detail that will be critical later. The maps $\phi^{M'}$ between layer $1$ and layer $2$ are defined by \begin{equation*}\phi_1^{M'}=\begin{bmatrix}
f_{11}^{21} &  & &\\
 & f_{12}^{22} & &\\
 & & \ddots & \\
 & & & f_{1m}^{2m}
\end{bmatrix}.\end{equation*}
Layer $0$ of $M'$ is $ K[\max_i(\alpha_i'), \max_i(\beta_i')]=I_{01}$, and the maps between layer $0$ and layer $1$ are given by \begin{equation*}\phi_0^{M'}=\begin{bmatrix}
f_{01}^{11}\\  f_{01}^{12} \\ \vdots \\ f_{01}^{1m}
\end{bmatrix}.\end{equation*}

The second construction from [BE1], which we call the \textbf{dual rectangular construction}, is as follows:  Layer $0$ of $M'$ is $M= \bigoplus\limits_{i=1}^m K[\alpha_i, \beta_i]=:\bigoplus\limits_{i=1}^m I_{0i}$. In layer $1$, place $\bigoplus\limits_{i=1}^m K[\alpha_i', \beta_i]=\bigoplus\limits_{i=1}^m I_{1i}$ where $\{\alpha_i'\}_i$ are chosen to be distinct elements of $\mathbb{N}^n$ such that $\alpha_i'\leq\alpha_i$ for all $i$ and $\alpha_i'\leq \frac{\alpha_j'+\beta_j}{2}$ for all $i,j$ (where our partial ordering on $\mathbb{N}^n$ has again been extended to a partial ordering on $(\frac{1}{2}\mathbb{N})^n$). Distinctness implies that, without loss of generality, $\alpha_i'<\alpha_{i+1}'$ for all $i$. 

\begin{remark} Notice that such restrictions on $\{\alpha_i'\}_i$ may force some $\alpha_i'$ to be negative, which is not allowed. In this case, simply shift $M'$ to the right until all $\{\alpha_i'\}_i$ are non-negative. Due to the triviality of such shifting issues, we will not mention them in the constructions given henceforth.\end{remark}

The maps $\phi^{M'}$ between layer $0$ and layer $1$ are given by \begin{equation*}\phi_0^{M'}=\begin{bmatrix}
f_{01}^{11} &  & &\\
 & f_{02}^{12} & &\\
 & & \ddots & \\
 & & & f_{0m}^{1m}
\end{bmatrix}.\end{equation*} Define $\mu\in\mathbb{N}^n$ by $\mu^j:=\min_i(\alpha_i'+\beta_i)^j$. Layer $2$ is given by $\bigoplus\limits_{i=1}^m K[\alpha_i', \beta_i']=:\bigoplus\limits_{i=1}^m I_{2i}$ where $\beta_i':=\mu-\alpha_i'$. This implies that $\alpha_i'<\alpha_{i+1}'<\beta_{i+1}'<\beta_{i}'$ for all $i$ since $\alpha_i'<\alpha_{i+1}'$. In particular, we have that $\Supp(I_{2(i+1)})\subsetneq \Supp(I_{2i})$ for all $i$. The maps $\phi^{M'}$ between layer $1$ and layer $2$ are summarized by \begin{equation*}\phi_1^{M'}=\begin{bmatrix}
f_{11}^{21} &  & &\\
 & f_{12}^{22} & &\\
 & & \ddots & \\
 & & & f_{1m}^{2m}
\end{bmatrix}.\end{equation*}
Layer $3$ of $M'$ is $ K[\min_i(\alpha_i'), \min_i(\beta_i')]:=I_{31}$. The maps from layer $2$ to layer $3$ are given by \begin{equation*}\phi_2^{M'}=\begin{bmatrix}
f_{21}^{31} & f_{22}^{31} & \hdots & f_{2m}^{31}
\end{bmatrix}.\end{equation*}

The third construction from [BE1], which we call the \textbf{glued rectangular construction}, is to place $M$ in layer $3$, then place the main rectangular construction for $M$ in layers $0-2$ and the dual rectangular construction for $M$ in layers $4-6$. For an example of these three constructions, see Figure \ref{fig:BuchetConstructions}.

\begin{figure}[h]
    \centering
    \includegraphics[scale=.27]{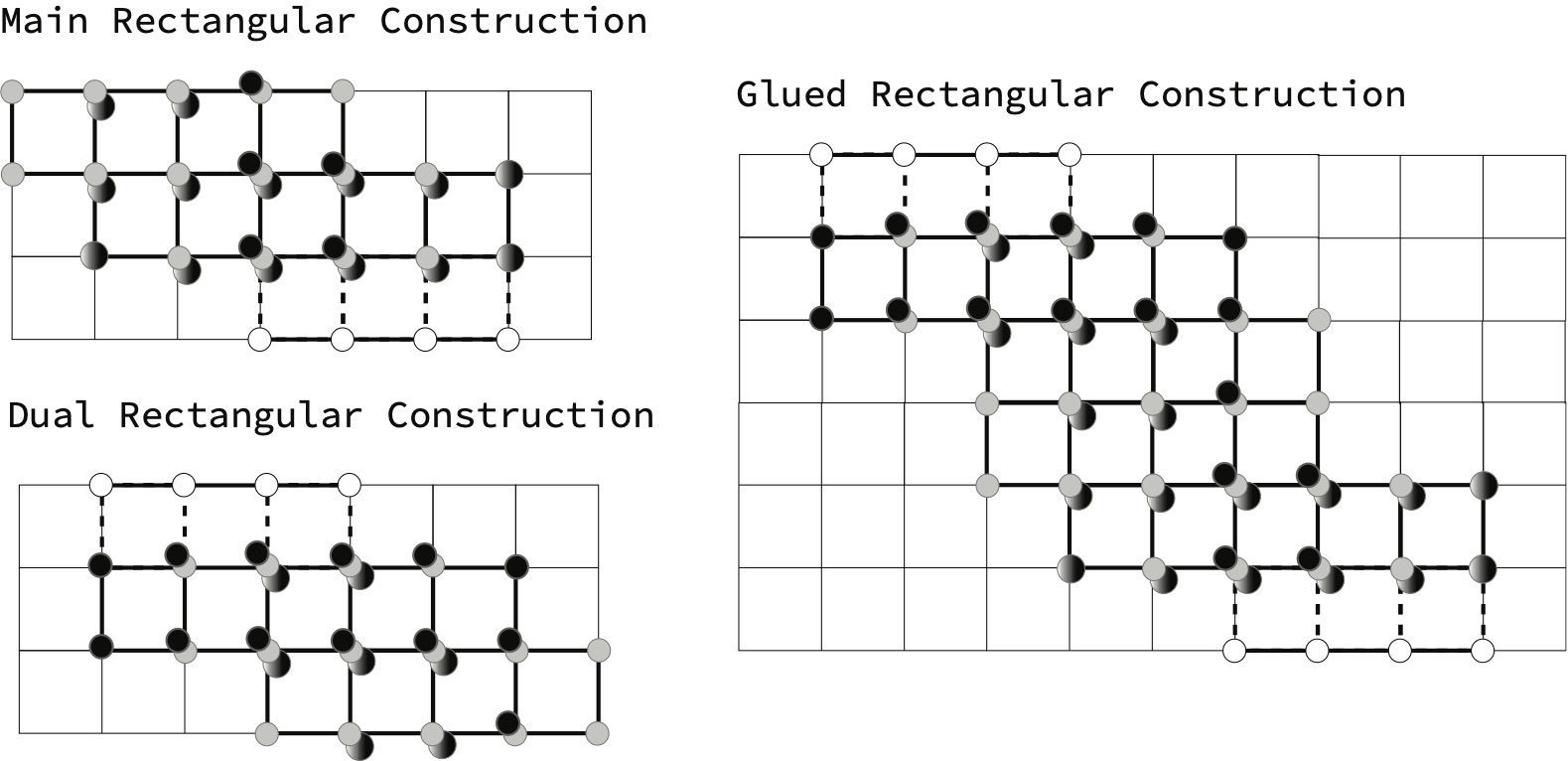}
    \vspace{.2cm}
    \caption{This is an example of the three constructions given in [BE1] for creating an indecomposable $2$-dimensional persistence module $M'$ from a finite $1$-dimensional persistence module $M$ such that $M$ is a hyperplane restriction of $M'$. In particular, these $2$-dimensional persistence modules come from the $1$-dimensional persistence module $M= K[3,3]\bigoplus K[0,4]\bigoplus K[1,2]$. In the main rectangular construction, dashed edges represent the map which sends the white basis vector to $(1,1,1)$. In the dual rectangular construction, dashed edges represent the map which sends each of the gray, black, and ombre basis vectors to the white basis vector.}
    \label{fig:BuchetConstructions}
\end{figure}

\begin{proof}
We prove that the dual rectangular construction results in a finite indecomposable $(n+1)$-dimensional persistence module $M'$ when $M=\bigoplus\limits_{i=1}^m K[\alpha_i, \beta_i]$ is a finite $n$-dimensional (rectangular) persistence module. The proofs involving the other constructions are similar.

Finiteness of $M'$ is clear. By Lemma 2.1, to show that $M'$ is indecomposable, it suffices to prove that $\End(M')\cong K$. Let $f\in\End(M')$. Then $f$ is equivalent to a set of endomorphisms $f_j$ on each layer $j$ of $M'$ such that the maps $\phi_i^{M'}$ and $f_j$ commute. In particular, we need $\phi_{i-1}^{M'} \circ f_{i-1}= f_i \circ \phi^{M'}_{i-1}$ for all $i$. By Lemma 2.3, $f_3=[cf_{31}^{31}]$ for some $c\in K$. We wish to show that \begin{equation*}f_i=\begin{bmatrix}
cf_{i1}^{i1} &  & &\\
 & cf_{i2}^{i2} & &\\
 & & \ddots & \\
 & & & cf_{im}^{im}
\end{bmatrix}\end{equation*} for all $i\leq 2$, which implies that $f$ is determined fully by the choice of $c\in K$.

Note that \begin{equation*}f_3\circ\phi_2^{M'}=\begin{bmatrix}
cf_{21}^{31} & cf_{22}^{31} &\dotsb& cf_{2m}^{31}
\end{bmatrix}.\end{equation*}  By Lemma 2.3, 
$\Hom(I_{2i}, I_{2j})\cong \begin{cases}
        K & \text {if } i=j  \\
      0 & \text{otherwise,}
    \end{cases}$
as $I_2i=K [\alpha_i', \beta_i']$ and $\alpha_i'<\alpha_{i+1}'<\beta_{i+1}'<\beta_{i}'$  for all $i$. As such, \begin{equation*}f_2=\begin{bmatrix}
c_{11}f_{21}^{21} &  & &\\
 & c_{22}f_{22}^{22} & &\\
 & & \ddots & \\
 & & & c_{mm}f_{2m}^{2m}
\end{bmatrix}\end{equation*} for some constants $c_{ii}\in K$, which implies that \begin{equation*}\phi_2^{M'}\circ f_2= \begin{bmatrix}
c_{11}f_{21}^{31} & c_{22}f_{22}^{31} &\dotsb& c_{mm}f_{2m}^{31}
\end{bmatrix}.\end{equation*} Since $f_3\circ\phi_2^{M'}=\phi_2^{M'}\circ f_2$, we may conclude that \begin{equation*}f_2=\begin{bmatrix}
cf_{21}^{21} &  & &\\
 & cf_{22}^{22} & &\\
 & & \ddots & \\
 & & & cf_{2m}^{2m}
\end{bmatrix},\end{equation*} as claimed.

Now we wish to show that \begin{equation*}f_1=\begin{bmatrix}
cf_{11}^{11} &  & &\\
 & cf_{12}^{12} & &\\
 & & \ddots & \\
 & & & cf_{1m}^{1m}
\end{bmatrix}.\end{equation*} By Lemma 2.3, $f_1=\begin{bmatrix} c_{jk}g_{1j}^{1k}
\end{bmatrix}_{j,k}$, where \[ g_{1j}^{1k}=\left\{
\begin{array}{ll}
      f_{1j}^{1k} & \text{if } \Hom(I_{1j}, I_{1k})\cong K \\
      0 & \text{if } \Hom(I_{1j}, I_{1k})=0 \\
 
\end{array}
\right. \]
 and each $c_{jk}\in K$. The same lemma also guarantees that $g_{ij}^{ij}=f_{ij}^{ij}$ for all $j$. That is, the diagonal entries of $f_1$ are nonzero (unless $c_{jj}=0)$, but the off-diagonal entries may or may not be zero. Then $$\phi_{1}^{M'} \circ f_{1}=\begin{bmatrix} c_{jk}g_{1j}^{2k}
\end{bmatrix}_{j,k}=\begin{bmatrix}
cf_{11}^{21} &  & &\\
 & cf_{12}^{22} & &\\
 & & \ddots & \\
 & & & cf_{1m}^{2m}
\end{bmatrix}= f_2\circ \phi^{M'}_{1}$$ implies that $c_{jk}g_{1j}^{2k}=c \delta_{j,k} f_{1j}^{2k}$ for all $j,k$. Thus $c_{jj}=c$ for all $j$. 

We claim that $c_{jk}g_{1j}^{1k}=0$ if $j\neq k$. If $g_{1j}^{1k}=0$, then we are done. Otherwise, we have that $g_{1j}^{1k}\neq 0$ but $c_{jk}g_{1j}^{2k}=c_{jk}g_{1k}^{2k}\circ g_{1j}^{1k}= 0$. Recall that $I_{1i}= K[\alpha_i',\beta_i]$ for each $i$. By Lemma 2.3, $g_{1j}^{1k}\neq 0$ implies that $\alpha_k'\leq \alpha_j'\leq\beta_k\leq\beta_j$. The definition of $\{\alpha_i'\}_i$ implies that $\alpha_k'< \alpha_j'\leq\beta_k\leq\beta_j$. Recall that $I_{2k}= K[\alpha_k',\beta_k']$ where $\beta_k\leq \beta_k'$. The definition of $\beta_k'$ further implies that $\alpha_k'< \alpha_j'<\beta_k'\leq\beta_j.$ By Lemma 2.3, there thus exists a nonzero homomorphism from $I_{1j}$ into $I_{2k}$. As such, $g_{1j}^{2k}=f_{1j}^{2k}\neq 0$. Thus $c_{jk}g_{1j}^{2k}=0$ implies $c_{jk}=0$, as desired.

It can similarly be shown that $$f_0=\begin{bmatrix}
cf_{01}^{01} &  & &\\
 & cf_{02}^{02} & &\\
 & & \ddots & \\
 & & & cf_{0m}^{0m}
\end{bmatrix}$$ by paying careful attention to the relationships between the intervals $I_{0i}, I_{0j}$ and $I_{1j}$  for $i\neq j$. Thus $f\in\End(M')$ is uniquely determined by the choice of $c\in K$.
\end{proof}

\section{Generalizations of Buchet and Escolar's result}
We now state our first result, which will be substantially strengthened later in Theorem 3.3.

\begin{theorem}
Let $M$ be a finite $n$-dimensional interval persistence module. Then there exists an indecomposable $(n+1)$-dimensional persistence module $M'$ such that $M$ is the restriction of $M'$ to a hyperplane. 
\end{theorem}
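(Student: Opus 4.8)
The plan is to adapt the dual rectangular construction from Theorem 2.6 so that it applies when $M = \bigoplus_{i=1}^m I_i$ is a direct sum of arbitrary finite interval $n$D persistence modules, not just rectangles. I place $M$ in layer $0$, and for each successive layer I enlarge the supports of the summands while "thinning the homomorphism lattice" so that eventually all the intervals collapse to a single indecomposable summand in the top layer. Concretely, in layer $1$ I want to replace each $I_i = K[\,\cdot\,]$-style interval by an interval $I_{1i}$ whose support contains $\Supp(I_i)$ and with the property that the natural map $f_{0i}^{1i}$ is nonzero (this needs $\Supp(I_i) \subseteq \Supp(I_{1i})$ together with viability, which holds because we only grow the support upward). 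In subsequent layers I choose the $I_{2i}$ so that their supports become \emph{nested}, $\Supp(I_{2(i+1)}) \subsetneq \Supp(I_{2i})$, exactly as in the rectangular dual construction; this is the key feature that forces $\Hom(I_{2i},I_{2j}) \cong K$ for $i=j$ and $0$ otherwise. Finally layer $3$ (more precisely, the final layer) is a single interval $I_{31}$ whose support contains all $\Supp(I_{2i})$, with map $\phi^{M'}$ from layer $2$ given by the row vector $[f_{21}^{31} \cdots f_{2m}^{31}]$.

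The indecomposability argument then follows the template of the proof of Theorem 2.6 almost verbatim, using Lemma 2.3 (the $\Hom$-computation for interval modules) in place of the rectangular $\Hom$ formula. Given $f \in \End(M')$, one has $f_3 = [c\, f_{31}^{31}]$ for a scalar $c$; the nestedness of the layer-$2$ supports forces $f_2$ to be block-diagonal, and commutativity $f_3 \circ \phi_2^{M'} = \phi_2^{M'} \circ f_2$ pins each diagonal scalar to the same $c$. Pushing down to layer $1$, Lemma 2.3 allows the off-diagonal entries $c_{jk} g_{1j}^{1k}$ to be nonzero a priori; one kills them by the same trick as in Theorem 2.6, namely showing that whenever $g_{1j}^{1k} \neq 0$ the induced composite $g_{1j}^{2k}$ into the (nested, hence larger for smaller index) layer-$2$ interval is still nonzero, so commutativity forces $c_{jk} = 0$. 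Descending to layer $0$ is the same with the relevant intervals. Hence $\End(M') \cong K$, and by Lemma 2.1 $M'$ is indecomposable; finiteness is immediate, and $M$ sits as the hyperplane restriction (layer $0$).

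The main obstacle is verifying that the enlargements $\Supp(I_i) \leadsto \Supp(I_{1i}) \leadsto \Supp(I_{2i})$ can actually be carried out for arbitrary (non-rectangular) intervals while simultaneously (i) keeping each enlargement a genuine indecomposable interval module (so that Remark 2.2's convexity-type property is respected), (ii) making the chosen maps $f_{ri}^{sj}$ nonzero exactly where the construction needs them — which amounts to controlling viability of the relevant connected components of the support intersections — and (iii) achieving the strict nesting of the layer-$2$ supports. For rectangles this was a matter of choosing coordinates $\alpha_i', \beta_i'$; for general intervals one instead has to argue that translating a copy of $\Supp(I_i)$ far in the $+\mathbf{1}$ direction and taking an up-set / down-set closure produces nested intervals whose pairwise intersections have the right viability. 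I would handle this by taking, in layer $2$, the supports to be translates of a common large rectangle (or of the "convex hull" up-set of $\bigcup_i \Supp(I_i)$) shrunk by index-dependent amounts, reducing the nesting and viability checks to the already-established rectangular case. The parallel adaptations of the main and glued constructions are then routine, which is why — as the authors note — all three work here even though only one will survive to Theorem 3.3.
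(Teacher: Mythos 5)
Your high-level plan (mimic the dual rectangular construction and its endomorphism computation) matches the spirit of the paper's proof, but the two construction steps you actually commit to would produce zero maps where nonzero ones are required, and the idea that repairs this---which is the real content of the paper's argument---is absent. First, the direction of your layer-$1$ enlargement is backwards: for $f_{0i}^{1i}:I_{0i}\rightarrow I_{1i}$ to be nonzero with $\Supp(I_{0i})\subseteq\Supp(I_{1i})$, Lemma 2.3 requires that $\Supp(I_{1i})$ contain no point strictly \emph{above} a point of $\Supp(I_{0i})$ other than points of $\Supp(I_{0i})$ itself (this is exactly the second non-viability condition). Growing the support upward therefore forces $\Hom(I_{0i},I_{1i})=0$; the paper instead grows each support \emph{downward}, to $\{\gamma:\exists\,\beta\in\Supp(I_{0i}),\ \alpha_i\leq\gamma\leq\beta\}$ where $\alpha_i$ is the coordinatewise minimum, and uses Remark 2.2 to verify viability. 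Second, and more seriously, your proposed resolution of the ``main obstacle''---taking the layer-$2$ supports to be large rectangles containing the intervals---fails for the same reason: if a rectangle $R$ strictly contains $\Supp(I_{1i})$ and extends above it (which it must unless $I_{1i}$ is already rectangular), then $\Hom(I_{1i},R)=0$. The paper's key move is the opposite one: having arranged that each $I_{1i}$ has a unique minimal element $\alpha_i$, it replaces $I_{1i}$ in layer $2$ by $\bigoplus_j K[\alpha_i,\beta_{ij}]$, the sum over the \emph{maximal} elements $\beta_{ij}$ of $\Supp(I_{1i})$ of rectangles \emph{contained in} $\Supp(I_{1i})$, onto each of which $I_{1i}$ maps nontrivially; only then does it hand off to the already-proven dual rectangular construction in the layers above. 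In particular the map from layer $1$ to layer $2$ is a block matrix of columns rather than a diagonal, and the number of summands changes---there is no step at which the general intervals themselves are ``nested.''

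The endomorphism computation also needs more than a verbatim transcription. Layers $2$ through $5$ are indeed disposed of by quoting the rectangular case, but two new phenomena occur below. At layer $1$, a nonzero off-diagonal $g_{1i}^{1j}$ is killed by exhibiting a maximal element $\beta_{jk}$ of $\Supp(I_{1j})$ with $\Supp(I_{1i})\cap\Supp(I_{2jk})\neq\emptyset$, so that the composite into the rectangle $I_{2jk}$ is nonzero and commutativity forces $c_{ij}=0$. At layer $0$, $\Hom(I_{0i},I_{0j})$ can be $K^k$ with $k>1$ (several viable components of $\Supp(I_{0i})\cap\Supp(I_{0j})$), so the entries of $f_0$ are not scalar multiples of a single fixed map; the paper argues on a single viable component, where composing with $f_{0j}^{1j}$ still acts as a nonzero multiple of the identity. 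Your statement that ``descending to layer $0$ is the same'' glosses over exactly this point.
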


Based on Buchet and Escolar's constructions from [BE1], we have three methods for constructing such $M'$. The first method is the \textbf{main interval construction}, which is defined as follows: Place $M$ in layer $5$ of $M'$. Suppose $\bigoplus\limits_{i=1}^m I_{5i}$ is the decomposition of $M$ into indecomposable $n$D (interval) persistence modules. For $\alpha\in\mathbb{N}^n$, let $\alpha^j$ denote the $j^{th}$ coordinate of $\alpha$. For each $i$, define $\beta_i\in\mathbb{N}^n$ by $\beta_i^j=\max\limits_{\alpha\in\Supp(I_{5i})}(\alpha^j)$. It follows that $\alpha\leq \beta_i$ for all $\alpha\in I_{5i}$. Define $I_{4i}$ to be the indecomposable $n$-dimensional interval persistence module whose support is given by \begin{equation*}\Supp(I_{4i})=\{\gamma\in\mathbb{N}^n|\text{ there exists } \alpha\in\Supp(I_{5i}) \text{ satisfying } \alpha\leq\gamma\leq\beta_i \}.\end{equation*}

We claim that $\Hom(I_{4i}, I_{5i})\cong K$ for every $i$, which would allow us to define the maps $\phi^{M'}$ from layer $4$ to layer $5$ via $$\phi_4^{M'}=\begin{bmatrix}
f_{41}^{51} &  & &\\
 & f_{42}^{52} & &\\
 & & \ddots & \\
 & & & f_{4m}^{5m}
\end{bmatrix}.$$ By Lemma 2.3, it suffices to show that $\Supp(I_{5i})=\Supp(I_{4i})\cap\Supp(I_{5i})$ is a viable component. Suppose there exists $\gamma\in\Supp(I_{4i})\setminus \Supp(I_{5i})$ and $\delta\in \Supp(I_{5_i})$ such that $\gamma<\delta$. By the definition of $\Supp(I_{4i})$, $\gamma\in\Supp(I_{4i})$ implies that there exists $\alpha\in\Supp(I_{5i})$ such that $\alpha\leq\gamma<\delta$. By Remark 2.2, it follows that $\gamma\in\Supp(I_{5i})$, which is a contradiction. On the other hand, there are no points $\gamma\in\Supp(I_{5i})\setminus\Supp(I_{4i})$, so the second condition for non-viability also cannot exist in $\Supp(I_{4i})\cap\Supp(I_{5i})$.

Let $\{\alpha_{ij}\}_{j=1}^{\ell_i}$ denote the set of \textbf{minimal} elements in $I_{4i}$ (meaning there is no $\alpha\in \Supp(I_{4i})$ such that $\alpha<\alpha_{ij}$). Place $\bigoplus\limits_{i=1}^m\bigoplus\limits_{j=1}^{\ell_i}  K[\alpha_{ij}, \beta_i]=:\bigoplus\limits_{i=1}^m\bigoplus\limits_{j=1}^{\ell_i} I_{3ij} $ in layer $3$. Notice that $\Supp(I_{3ij})=\Supp(I_{3ij})\cap\Supp(I_{4i})$ cannot satisfy the first condition of non-viability, as there are no points $\gamma\in\Supp(I_{3ij})\setminus \Supp(I_{4i})$. On the other hand, suppose there exists $\gamma\in\Supp(I_{4i})\setminus\Supp(I_{3ij})$ and $\delta\in\Supp(I_{3ij})$ such that $\delta<\gamma$. Then $\delta<\gamma\leq \beta_i$. By Remark 2.2, $\delta, \beta_i\in\Supp(I_{3ij})$ implies that $\gamma\in\Supp(I_{3ij})$, which is a contradiction. Thus $\Supp(I_{3ij})=\Supp(I_{3ij})\cap\Supp(I_{4i})$ is a viable component, and  $\Hom(I_{3ij},I_{4i})\cong K$ for every $i\in[1,m]$ and $j\in[1,\ell_i]$. Define the maps $\phi^{M'}$ between layers $3$ and $4$ by \begin{equation*}\phi_3^{M'}:=\begin{bmatrix}
f_{311}^{41} & f_{312}^{41}  & \hdots &  f_{31\ell_1}^{41}&&&&&&&&&\\
&&&&f_{321}^{42}&f_{322}^{42}&\hdots &f_{32\ell_2}^{42}&&&&&\\
&&&&&&&&\ddots &&&&\\
&&&&&&&& &f_{3m1}^{4m}&f_{3m2}^{4m}&\hdots &f_{3m\ell_m}^{4m}
\end{bmatrix}.\end{equation*} For an example of constructing layers $3$ and $4$ of $M'$ from $M$, see Figure \ref{fig:MainIntervalConstruction}.

\begin{figure}[h]
    \centering
    \includegraphics[scale=.21]{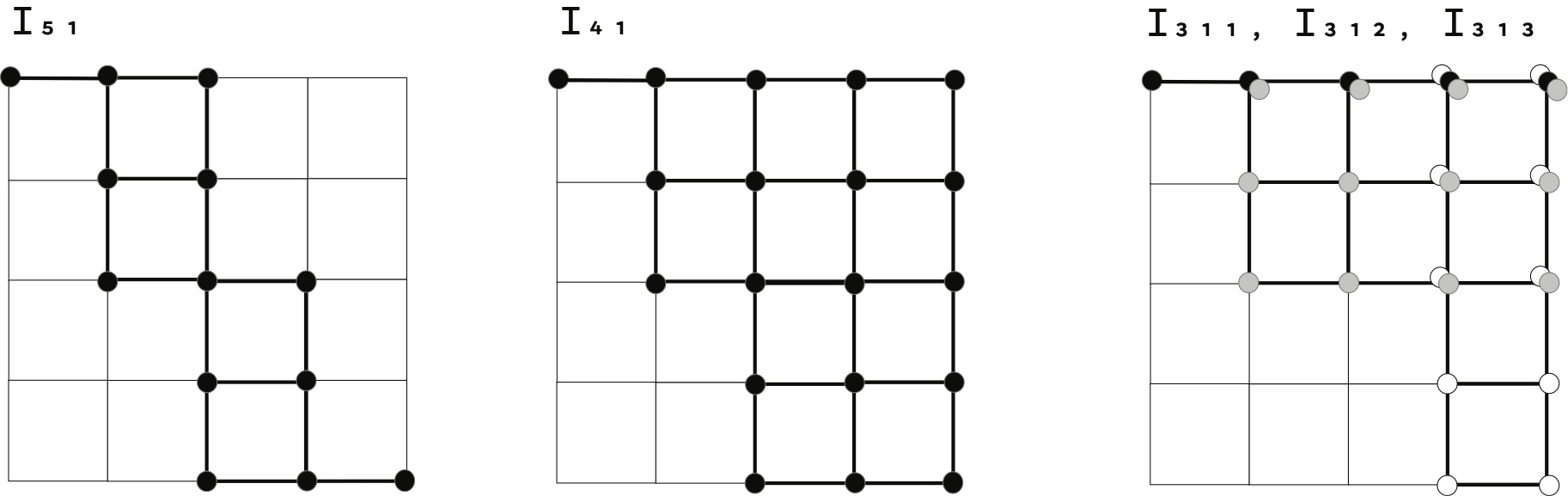}
    \vspace{.2cm}
    \caption{These are the top three layers of $M'$ in the main interval construction for $M=I_{51}$. In layer $5$, we have $M$. In layer $4$, the support of $I_{51}$ is extended to include $\beta_i=(4,4)$. In the third layer, we have three indecomposable rectangular $2$-dimensional persistence modules corresponding the three minimal elements of $\Supp(I_{51})$, namely $(2,0), (1,2),$ and $(0,4)$.}
    \label{fig:MainIntervalConstruction}
\end{figure}

Notice that layer $3$ is a finite rectangular $n$-dimensional persistence module (since $M$ was finite), so we may perform the main rectangular construction for layer $3$ in layers $0-2$. This finishes the main interval construction of $M'$.

The second construction is the \textbf{dual interval construction}. For this construction, place $M$ in layer $0$ of $M'$. Let $\bigoplus\limits_{i=1}^m I_{0i}$ be the decomposition of $M$ into its indecomposable summands. For each $i$, define $\alpha_i$ as the element in $\mathbb{N}^n$ such that $\alpha_i^j=\min\limits_{\beta\in\Supp(I_{0i})}(\beta^j)$. This yields that $\alpha_i\leq\beta$ for all $\beta\in\Supp(I_{0i})$. Define $I_{1i}$ as the indecomposeable interval $n$-dimensional persistence module with \begin{equation}\Supp(I_{1i}):=\{\gamma\in\mathbb{N}^n| \text{ there exists } \beta\in\Supp(I_{0i}) \text{ such that } \alpha_i\leq\gamma\leq\beta\}.\end{equation}  In layer $1$, place $\bigoplus\limits_{i=1}^m I_{1i}$.

We claim that $\Hom(I_{0i}, I_{1i})\cong K$ for all $i$. By Lemma 2.3, it suffices to show that $\Supp(I_{0i})= \Supp(I_{0i})\cap\Supp(I_{1i})$ is a viable component. There cannot be $\gamma\in\Supp(I_{0i})\setminus \Supp(I_{1i})$, so the first condition of non-viability cannot be satisfied. Suppose there exists $\gamma\in\Supp(I_{1i})\setminus\Supp(I_{0i})$ and $\delta\in I_{0i}$ such that $\delta<\gamma$. By equation (1), there exists $\beta\in\Supp(I_{0i})$ such that $\delta<\gamma\leq\beta$. Since $\delta,\beta\in\Supp(I_{0i})$, Remark 2.2 implies that $\gamma\in\Supp(I_{0i})$, which yields a contradiction. Thus $\Supp(I_{0i})= \Supp(I_{0i})\cap\Supp(I_{1i})$ is indeed a viable component, and the maps $\phi^{M'}$ between layers $0$ and $1$ of $M'$ are defined by $$\phi_0^{M'}=\begin{bmatrix}
f_{01}^{11} &  & &\\
 & f_{02}^{12} & &\\
 & & \ddots & \\
 & & & f_{0m}^{1m}
\end{bmatrix}.$$

For each $i$, let $\{\beta_{ij}\}_{j=1}^{\ell_i}$ denote the set of \textbf{maximal} elements in $\Supp(I_{1i})$ (meaning there is no $\beta\in\Supp(I_{1i})$ such that $\beta_{ij}<\beta$). In layer $2$, place $\bigoplus\limits_{i=1}^m \bigoplus\limits_{j=1}^{\ell_i}  K[\alpha_i, \beta_{ij}]=:\bigoplus\limits_{i=1}^m \bigoplus\limits_{j=1}^{\ell_i} I_{2ij}$. We claim that $\Hom(I_{1i}, I_{2ij})\cong K$ for all $i,j$. By Lemma 2.3, this follows if $\Supp(I_{2ij})=\Supp(I_{2ij})\cap\Supp(I_{1i})$ is a viable component. Suppose there exists $\gamma\in\Supp(I_{1i})\setminus \Supp(I_{2ij})$ and $\delta\in\Supp(I_{2ij})$ such that $\gamma<\delta$. Then  $\gamma\in\Supp(I_{1i})$ implies $\alpha_i\leq\gamma<\delta$. By Remark 2.2, because $\alpha_i, \delta\in\Supp(I_{2ij})$, we may conclude $\gamma\in\Supp(I_{2ij})$, which is a contradiction. Thus $\Supp(I_{2ij})$ fails the first condition for non-viability. The second condition for non-viability also cannot occur for $\Supp(I_{2ij})$, as $\Supp(I_{2ij})\setminus\Supp(I_{1i})=\emptyset$. Thus $\Supp(I_{2ij})$ is indeed a viable component and \begin{equation*}\phi_1^{M'}:=\begin{bmatrix}
f_{11}^{211} &  & &\\
 f_{11}^{212}& & &\\
\vdots &&&\\
 f_{11}^{21\ell_1}& & & \\
 &f_{12}^{221}  & &\\
 &f_{12}^{222} & &\\
&\vdots &&\\
 &f_{12}^{22\ell_2} & &  \\
 &&\ddots &\\
 & & & f_{1m}^{2m1}\\
  & & & f_{1m}^{2m2}\\
  &&&\vdots\\
   & & & f_{1m}^{2m\ell_m}
\end{bmatrix}\end{equation*} is a valid way of defining the maps $\phi^{M'}$ between layers $1$ and $2$ of $M'$. See Fig. \ref{fig:DualIntervalConstruction}. for an example of constructing $I_{1i}$ and $\{I_{2ij}\}_j$ from $I_{0i}$.

\begin{figure}[h]
    \centering
    \includegraphics[scale=.21]{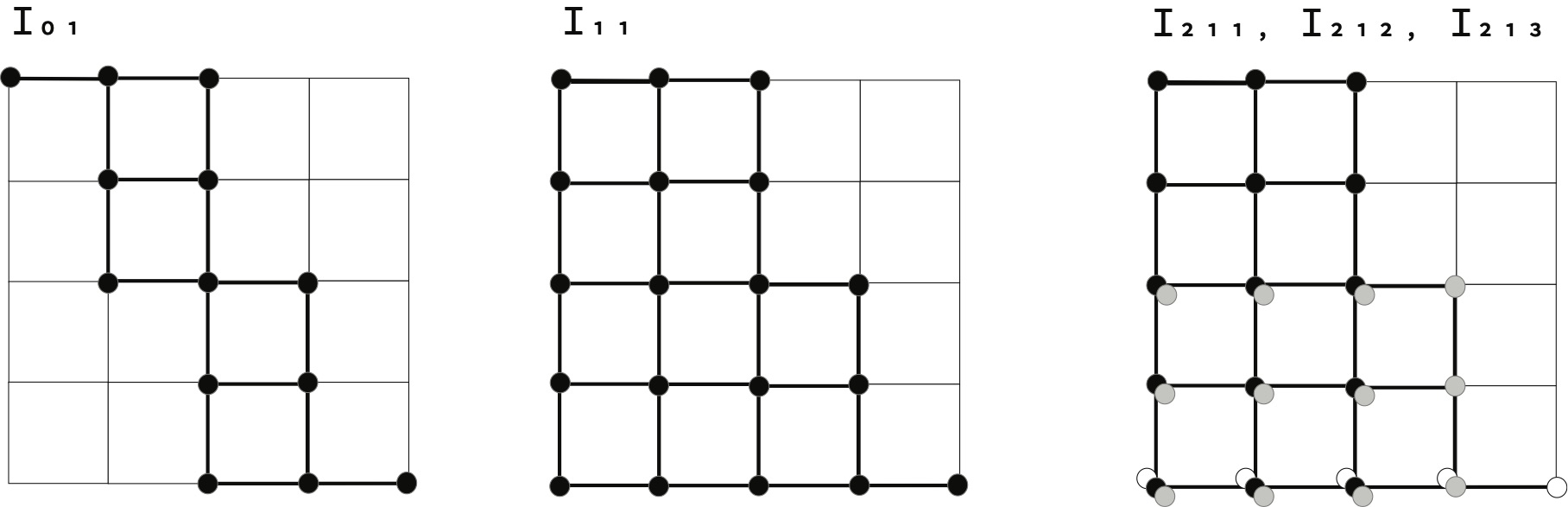}
    \vspace{.2cm}
    \caption{This is an example of the first three layers of $M'$ in the dual interval construction. In the zeroeth layer of $M'$, we have $M=I_{01}$. In the first layer of $M'$, we extend the support of $I_{01}$ to include $\alpha_1=(0,0)$. In the third layer, we have $I_{211}\bigoplus I_{212} \bigoplus I_{213}= K[(0,0),(2,4)]\bigoplus K[(0,0),(3,2)]\bigoplus K[(0,0), (4,0)]$, as $(2,4), (3,2),$ and $(4,0)$ are the maximal elements in $\Supp(I_{11})$.}
    \label{fig:DualIntervalConstruction}
\end{figure}

Notice that because $M$ is finite, layer 2 of $M'$ is a finite rectangular $n$-dimensional persistence module. Thus, we may perform the dual rectangular construction for the second layer of $M'$ in the planes above the second layer. This finishes the dual interval construction for $M'$. The \textbf{glued interval construction} of $M'$ is given by placing $M$ in layer 5 of $M'$ and then doing the main interval construction for $M$ in the layers below $M$, and the dual interval construction for $M$ in the layers above $M$.

\begin{proof}
Given $M$ as in Theorem 3.1, construct $M'$ using the dual interval construction. We now prove Theorem 3.1 by showing that $M'$ is indecomposable. The proofs for the main and glued interval constructions are similar.

Let $f\in\End(M')$. Then $f$ is equivalent to a set of endomorphisms $f_j$ on each layer $j$ of $M'$ such that $\phi_{i-1}^{M'} \circ f_{i-1}= f_i \circ \phi^{M'}_{i-1}$ for all $i$. By the proof of Theorem 2.5,  we know that $f_5=[cf_{51}^{51}]$ for some $c\in K$ and $$f_i=\begin{bmatrix}
cf_{i1}^{i1} &  & &\\
 & cf_{i2}^{i2} & &\\
 & & \ddots & \\
 & & & cf_{im}^{im}
\end{bmatrix}$$ for all $i\in[2,4]$. We wish to show that $f_i$ is also determined by $c$ when $i=0,1$.

Notice that $\Supp(I_{1j})\cap\Supp( I_{1k})$ has either zero components or one component due to the following: Suppose, without loss of generality, that $\alpha_{j}\leq\alpha_{k}$ and $\Supp(I_{1j})\cap\Supp( I_{1k})\neq\emptyset$. This implies that  $\alpha_{k}\leq\gamma$ for all $\gamma\in\Supp(I_{1j})\cap\Supp( I_{1k})$, since  $\alpha_{k}\leq\gamma$ for all $\gamma\in\Supp(I_{1k})$.  Then $\gamma,\delta\in\Supp(I_{1j})\cap\Supp( I_{1k})$ implies that $\alpha_k\leq \gamma,\delta$. By Remark 2.2, any element $\epsilon\in\mathbb{N}^n$ satisfying $\alpha_k\leq\epsilon\leq\gamma$ must be in both $\Supp(I_{1j})$ and $\Supp( I_{1k})$, so there is a path from $\alpha_k$ to $\gamma$ in $\Supp(I_{1j})\cap\Supp( I_{1k})$. Similarly, there is a path from $\alpha_k$ to $\delta$ in $\Supp(I_{1j})\cap\Supp( I_{1k})$. As such, $\gamma$ and $\delta$ are in the same component of $\Supp(I_{1j})\cap\Supp( I_{1k})$, implying that $\Supp(I_{1j})\cap\Supp( I_{1k})$ has at most one component. By Lemma 2.3, it follows that $f_1=[c_{ij}g_{1i}^{1j}]_{i,j}$ where \[ g_{1j}^{1k}=\left\{
\begin{array}{ll}
      f_{1j}^{1k} & \text{if } \Hom(I_{1j}, I_{1k})\cong K \\
      0 & \text{if } \Hom(I_{1j}, I_{1k})=0. \\
 
\end{array}
\right. \] Notice that $g_{1i}^{1i}=f_{1i}^{1i}$ is guaranteed for all $i$ by Lemma 2.3. Then  $$f_2 \circ \phi^{M'}_{1}=\begin{bmatrix}
cf_{11}^{211} &  & &\\
 cf_{11}^{212}& & &\\
\vdots &&&\\
 cf_{11}^{21\ell_1}& & & \\
 &cf_{12}^{221}  & &\\
 &cf_{12}^{222} & &\\
&\vdots &&\\
 &cf_{12}^{22\ell_2} & &  \\
 &&\ddots &\\
 & & & cf_{1m}^{2m1}\\
  & & & cf_{1m}^{2m2}\\
  &&&\vdots\\
   & & & cf_{1m}^{2m\ell_m}
\end{bmatrix}= \begin{bmatrix}
c_{11}f_{11}^{211} & c_{21} fg_{12}^{211}  & &\hdots &c_{m1} fg_{1m}^{211}\\
 c_{11}f_{11}^{212}& c_{21} fg_{12}^{212}& &&\vdots\\
\vdots &\vdots&&&\vdots\\
 c_{11}f_{11}^{21\ell_1}& c_{21} fg_{12}^{21\ell_1}&& &\vdots \\
 c_{12}fg_{11}^{221}&c_{22}f_{12}^{221}  & &&\vdots\\
 c_{12}fg_{11}^{222} &c_{22}f_{12}^{222} & &&\vdots\\
\vdots&\vdots &&&\vdots\\
 c_{12}fg_{11}^{22\ell_2} &c_{22}f_{12}^{22\ell_2} & &&\vdots  \\
 \vdots& \vdots&\ddots &&\vdots\\
 \vdots&\vdots& && c_{mm}f_{1m}^{2m1}\\
 \vdots &\vdots& && c_{mm}f_{1m}^{2m2}\\
 \vdots &\vdots&&&\vdots\\
  c_{1m} fg_{11}^{2m\ell_m} &\hdots&\hdots&\hdots& c_{mm}f_{1m}^{2m\ell_m}
\end{bmatrix}=\phi_{1}^{M'} \circ f_{1},$$
where $fg_{1i}^{2jk}:=f_{1j}^{2jk}\circ g_{1i}^{1j}$. Thus $c_{ii}=c$ for all $i$ and $c_{ij}f_{1j}^{2jk}\circ g_{1i}^{1j}=0$ whenever $i\neq j$ and $k\in[1, j_\ell]$. 

We claim that $c_{ij}g_{1i}^{1j}=0$ for all $i\neq j$. If $g_{1i}^{1j}=0$, we are done. Otherwise, $g_{1i}^{1j}$ is a nonzero homomorphism from $I_{1i}$ to $I_{1j}$, implying (by Lemma 2.3) that $\alpha_i\leq\alpha_j$ and $\Supp(I_{1i})\cap\Supp(I_{1j})\neq\emptyset$. As such, there exists a maximal element $\beta_{jk}\in\Supp(I_{1j})$ such that $\Supp(I_{1i})\cap K[\alpha_j,\beta_{jk}]=\Supp(I_{1i})\cap\Supp(I_{2jk})\neq\emptyset$. It follows that $f_{1j}^{2jk}\circ g_{1i}^{1j}=f_{1i}^{2jk}\neq 0$ on $\Supp(I_{1i})\cap\Supp(I_{2jk})$. Combining this with the fact that $c_{ij}f_{1j}^{2jk}\circ g_{1i}^{1j}=0$  implies that $c_{ij}=0$. Thus $$f_1=\begin{bmatrix}
cf_{11}^{11} &  & &\\
 & cf_{12}^{12} & &\\
 & & \ddots & \\
 & & & cf_{1m}^{1m}
\end{bmatrix}$$ is fully determined by $c$, as claimed.

Next, we consider the relationship $\phi_{0}^{M'} \circ f_{0}= f_1 \circ \phi^{M'}_{0}$. By above, we have $$f_1 \circ \phi^{M'}_{0}=\begin{bmatrix}
cf_{01}^{11} &  & &\\
 & cf_{02}^{12} & &\\
 & & \ddots & \\
 & & & cf_{0m}^{1m}
\end{bmatrix}.$$ Notice that $f_0=[c_{ij}G_{0i}^{0j}]_{i,j}$ where each $G_{0i}^{0j}\in\Hom(I_{0i}, I_{0j})$ and $\Hom(I_{0i}, I_{0j})\cong  K^k$ or $0$ (by Lemma 2.3). Unlike the settings we've previously worked with, $k>1$ is possible, depending on the shapes of $I_{0i}$ and $I_{0j}$. Lemma 2.3 further implies that $G_{0i}^{0i}=f_{0i}^{0i}$ for each $i$. Then $\phi_{0}^{M'} \circ f_{0}= [c_{ij}f_{0j}^{1j}G_{0i}^{0j}]_{i,j}$. Comparing with $f_1 \circ \phi^{M'}_{0}$ immediately gives $c_{ii}=c$ for all $i$ and $c_{ij}f_{0j}^{1j}G_{0i}^{0j}=0$ for all $i\neq j$. 

We claim that $c_{ij}G_{0i}^{0j}=0$ for all $i\neq j$. If $G_{0i}^{0j}=0$, we are done. Otherwise, by Lemma 2.3, there exists a viable component $C$ of $\Supp(I_{0i})\cap\Supp(I_{0j})$ such that $G_{0i}^{0j}|_C=d \Id_{ K}$ for some nonzero constant $d\in K$. Thus $c_{ij}f_{0j}^{1j}G_{0i}^{0j}|_C=c_{ij}d \Id_{ K}$. Recall that we also know $c_{ij}f_{0j}^{1j}G_{0i}^{0j}=0$, so it must be the case that $c_{ij}=0$. Thus, $c_{ij}G_{0i}^{0j}=0$ for all $i\neq j$ and $$f_0=\begin{bmatrix}
cf_{01}^{01} &  & &\\
 & cf_{02}^{02} & &\\
 & & \ddots & \\
 & & & cf_{0m}^{0m}
\end{bmatrix}$$ is fully determined by the choice of $c$.


\end{proof}

Theorem 3.1 has an interesting consequence on zigzag persistence modules, which are a generalization of $1$-dimensional persistence modules that were first introduced in [CdS].

A \textbf{zigzag persistence module $M$} over $ K$ is an assignment of a vector space $M_\alpha$ to each $\alpha\in\mathbb{N}$ and for each $\alpha\in\mathbb{N}$, a homomorphism of the form $\phi^M_{\alpha, \alpha+1}:M_\alpha\rightarrow M_{\alpha+1}$ \underline{or} of the form $\phi^M_{\alpha+1, \alpha}:M_{\alpha+1}\rightarrow M_{\alpha}$. A map of the form $\phi^M_{\alpha, \alpha+1}:M_\alpha\rightarrow M_{\alpha+1}$ is said to be \textbf{forwards-oriented}, whereas a map of the form $\phi^M_{\alpha+1, \alpha}:M_{\alpha+1}\rightarrow M_{\alpha}$ is \textbf{backwards-oriented}.

Thus a $1$-dimensional persistence module is a zigzag module in which all maps are forward-oriented. The definitions of \textbf{finitely generated} and \textbf{finite} zigzag persistence modules are analagous to the definitions in the $n$D persistence module case. Finite zigzag modules also have indecomposeable decompositions (which are again defined analagously to the nD persistence module case). A zigzag module $M$ is indecomposeable if and only if $\Supp(M)=[\alpha,\beta]$ for some $\alpha, \beta$ with $\dim(M_\gamma)= \begin{cases}
        1 & \text {if } \alpha\leq\gamma\leq\beta \\
      0 & \text{otherwise,}
    \end{cases}$ and $\phi^M_{\gamma,\delta}=\begin{cases}
        \Id_K & \text {if } \alpha\leq\gamma\leq\delta\leq\beta  \\
      0 & \text{otherwise}
    \end{cases}$ [G].

\begin{corollary}
Let $M=\bigoplus\limits_{i=1}^m K[\alpha_i, \beta_i]$ be a finite zigzag persistence module. Then there exists an indecomposable $3$-dimensional persistence module $M'$ such that $M$ is the restriction of $M'$ to a path. 
\end{corollary}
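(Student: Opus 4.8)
\emph{Strategy.} The plan is to first realize $M$ as the restriction of a finite $2$-dimensional \emph{interval} persistence module to a path lying inside a single copy of $\mathbb{N}^2$, and then apply Theorem 3.1 to that module to lift everything by one dimension. Here a \emph{path} in a persistence module is understood as a finite sequence of points, consecutive ones comparable in either order, together with its induced zigzag representation.

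\emph{Embedding the zigzag into the plane.} Say $M$ lives over the zigzag with positions $0,1,\dots,n$, and for $1\le k\le n$ let $o_k\in\{\mathrm{f},\mathrm{b}\}$ record whether the arrow between positions $k-1$ and $k$ is forwards- or backwards-oriented; set $B=\#\{k: o_k=\mathrm{b}\}$. I would define a path $P=(p_0,\dots,p_n)$ in $\mathbb{N}^2$ by $p_0=(0,B)$, with $p_k=p_{k-1}+e_1$ if $o_k=\mathrm{f}$ and $p_k=p_{k-1}-e_2$ if $o_k=\mathrm{b}$. Along $P$ the first coordinate is nondecreasing and the second nonincreasing, the points $p_0,\dots,p_n$ are pairwise distinct, and $p_{k-1}<p_k$ precisely when $o_k=\mathrm{f}$ while $p_k<p_{k-1}$ precisely when $o_k=\mathrm{b}$; hence restricting \emph{any} $2$-dimensional persistence module to $P$ yields a zigzag module with exactly the orientation pattern of $M$. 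For each summand $K[\alpha_i,\beta_i]$ of $M$, let $J_i$ be the $2$-dimensional interval persistence module whose support is the order-convex hull
\[
S_i\;:=\;\bigl\{\gamma\in\mathbb{N}^2 \;:\; p_j\le\gamma\le p_k \text{ for some } \alpha_i\le j\le k\le\beta_i\bigr\}
\]
of the corresponding block of $P$. One checks that $S_i$ is convex, so (cf.\ Remark 2.2) $J_i$ is a legitimate interval module, and that $S_i\cap\{p_0,\dots,p_n\}=\{p_{\alpha_i},\dots,p_{\beta_i}\}$; the crux is that two path points are comparable only when the sub-path joining them uses no backwards step (respectively no forwards step), which together with the distinctness of the $p_k$ keeps every $p_\ell$ with $\ell\notin[\alpha_i,\beta_i]$ out of $S_i$. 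Thus $J_i|_P\cong K[\alpha_i,\beta_i]$, and $N:=\bigoplus_{i=1}^m J_i$ is a finite $2$-dimensional interval persistence module with $N|_P\cong M$.

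\emph{Lifting and conclusion.} Apply Theorem 3.1 to $N$: there is an indecomposable $3$-dimensional persistence module $M'$ such that $N$ equals the restriction of $M'$ to a hyperplane $H=\{x_3=j\}\subseteq\mathbb{N}^3$. Viewing $P$ as a path inside $H$, we obtain $M\cong N|_P=(M'|_H)|_P=M'|_P$, so $M$ is the restriction of the indecomposable $3$-dimensional module $M'$ to a path, as required.

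\emph{Main difficulty.} The delicate part is the planar embedding: one must arrange that forwards arrows of the zigzag become ``upward'' steps for the product order on $\mathbb{N}^2$ and backwards arrows become ``downward'' steps, and then verify that each convex-hull support $S_i$ meets $P$ in exactly the intended contiguous block and nowhere else. Once that bookkeeping is done, the interval property and finiteness of $N$, the invocation of Theorem 3.1, and the two-stage restriction in the conclusion are all routine.
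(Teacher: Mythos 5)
Your proposal is correct and follows essentially the same route as the paper: embed the zigzag along a staircase in $\mathbb{N}^2$ (forwards arrows as $+e_1$ steps, backwards arrows as $-e_2$ steps), note that each summand then becomes a finite $2$-dimensional interval module restricting back to $K[\alpha_i,\beta_i]$ on the path, and apply Theorem 3.1 to the resulting interval module. Your explicit check that each staircase block is order-convex and meets the path only in the intended positions is bookkeeping the paper delegates to a figure, but the argument is the same.
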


\begin{proof} Let $Q_2$ denote the graph whose vertices are the elements of $\mathbb{N}^2$ and whose edges are of the form $\alpha\rightarrow\alpha_i$ for all $\alpha\in\mathbb{N}^n$ and $i\in[1,2]$. Place $M$ in $Q_2$ such that the backwards oriented arrows of $M$ go along arrows of the form $\alpha\rightarrow\alpha+e_2$ in$Q_2$ and forwards oriented arrows in $M$ go along arrows of the form $\alpha\rightarrow\alpha+e_1$ in $Q_2$. See Fig. \ref{fig:PlacingZigzags} for examples of these placements. The result is a finite $2$-dimensional interval persistence module. Apply Theorem 2.1.

\begin{figure}[h]
    \centering
    \includegraphics[scale=.19]{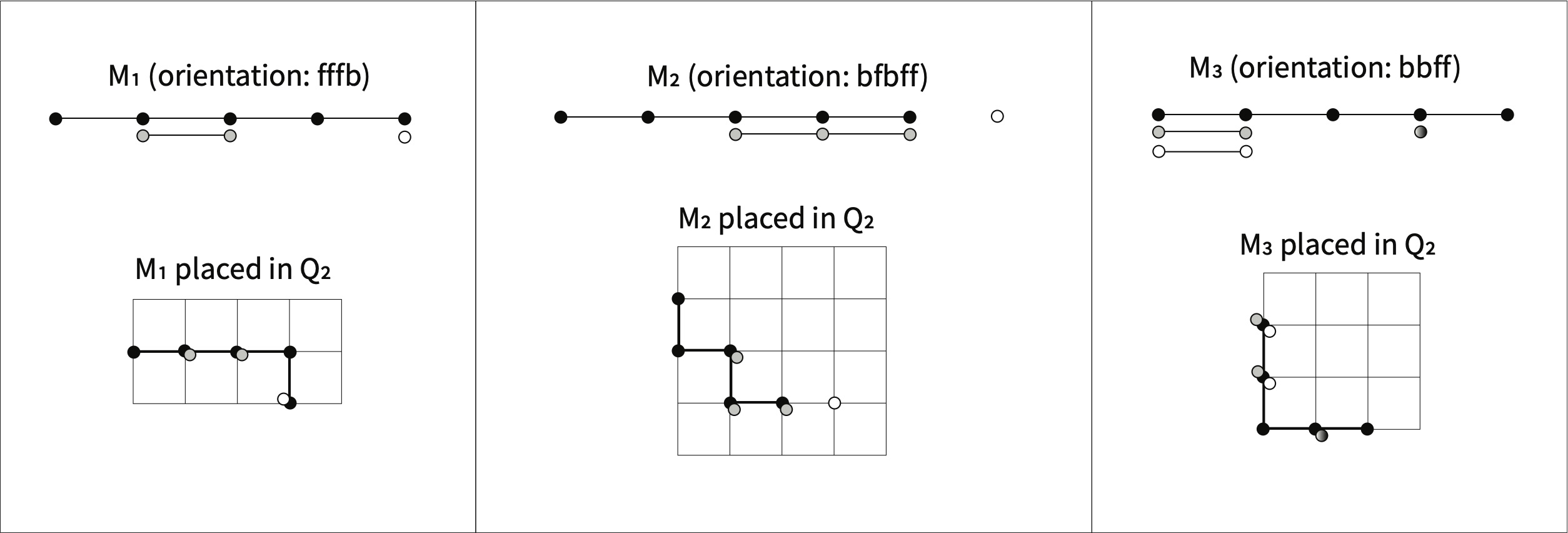}
    \vspace{.2cm}
    \caption{Shown are three examples of how to place a zigzag module $M_i$ onto $Q_2$. The orientation of each $M_i$ is given, with `f' indicating a forward oriented arrow, and `b' indicating backwards oriented arrows. For example, the first four maps in $M_1$ are forwards-oriented, but the map between the final two coordinates is backwards-oriented. Similar to our depictions of $n$-dimensional persistence modules, each dot at $\alpha$ represents a basis vector in $M_\alpha$. The homomorphisms in $M_i$ are denoted by edges; an edge between dots indicates which basic vectors map to each other.}
    \label{fig:PlacingZigzags}
\end{figure}
\end{proof}

\begin{theorem}
Let $M$ be any finite $n$-dimensional persistence module. Then there exists an indecomposable $(n+1)$-dimensional persistence module $M'$ such that $M$ is the restriction of $M'$ to a hyperplane. 
\end{theorem}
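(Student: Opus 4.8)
The plan is to reduce the general case to Theorem 3.1 by replacing $M$ with an interval module that ``encodes'' it, at the cost of adding one dimension. More precisely, given a finite $n$-dimensional persistence module $M$, I first want to realize $M$ as a hyperplane restriction (or, more conveniently, as a \emph{summand-wise} restriction) of a finite $(n+1)$-dimensional \emph{interval} persistence module, and then apply Theorem 3.1 to that interval module to land inside an indecomposable $(n+2)$-dimensional module. Since that would give the wrong dimension count, the better route is to carry out exactly one of the three constructions from Theorem 3.1 directly, but with the rectangular pieces replaced by the indecomposable summands of $M$ (which are now arbitrary, not rectangular). The excerpt already warns that only \emph{one} of the three constructions survives this generalization, so I expect the intended proof uses the dual interval construction (the one written out in full in the proof of Theorem 3.1), adapted so that layer $0$ is $M$ itself rather than an interval module.

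Concretely, here is the sequence of steps. First, put $M=\bigoplus_{i=1}^m I_{0i}$ in layer $0$, where now each $I_{0i}$ is an arbitrary finite indecomposable $n$-dimensional persistence module, not necessarily an interval. Second, I need a replacement for the ``hull'' step: for each $i$ build an $I_{1i}$ in layer $1$ together with a distinguished map $I_{0i}\to I_{1i}$, chosen so that (a) $\mathrm{Hom}(I_{0i},I_{1i})$ contains a canonical nonzero element used on the diagonal, and (b) the $I_{1i}$ have the nesting/viability properties that made the endomorphism argument work — in particular that $\mathrm{Supp}(I_{1j})\cap\mathrm{Supp}(I_{1k})$ has at most one component and that a nonzero $I_{1i}\to I_{1j}$ forces the relevant support inclusions. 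The natural candidate is to take $I_{1i}$ to be the interval module on the ``lower set generated by $\mathrm{Supp}(I_{0i})$ up to its componentwise minimum'', exactly as in equation (1); the map $I_{0i}\to I_{1i}$ is then the componentwise identity on $\mathrm{Supp}(I_{0i})$. Third, layers $2$ through the top are built exactly as in the dual interval construction: layer $2$ consists of rectangular modules $K[\alpha_i,\beta_{ij}]$ over the maximal elements $\beta_{ij}$ of $\mathrm{Supp}(I_{1i})$, and then the dual rectangular construction of Theorem 2.5 is run on the (now genuinely rectangular) layer $2$. Fourth, prove indecomposability by computing $\mathrm{End}(M')$: from the top down, the argument from the proof of Theorem 3.1 shows every $f_i$ for $i\ge 1$ is the scalar $c$ times the diagonal, verbatim, because layers $\ge 1$ are interval/rectangular; the only new input is the bottom relation $\phi_0^{M'}\circ f_0 = f_1\circ\phi_0^{M'}$ with $f_0=[c_{ij}G_{0i}^{0j}]$ and $\mathrm{Hom}(I_{0i},I_{0j})$ now possibly large — but this is exactly the ``$k>1$ is possible'' paragraph already handled at the end of the proof of Theorem 3.1, so it transfers with essentially no change.

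The main obstacle, and the place I expect to spend real effort, is step two: verifying that the interval hulls $I_{1i}$ of the \emph{non-interval} summands $I_{0i}$ still satisfy all the support-geometry lemmas used in the endomorphism computation. Specifically I must recheck that $\mathrm{Supp}(I_{0i})=\mathrm{Supp}(I_{0i})\cap\mathrm{Supp}(I_{1i})$ is viable (so $\mathrm{Hom}(I_{0i},I_{1i})\cong K$ — here I only need existence of a canonical nonzero map, which is clear from the componentwise-identity description, but Lemma 2.3 as stated is for two interval modules, so I may instead argue directly that the componentwise identity commutes with all structure maps, using that $\alpha_i$ is the componentwise minimum of $\mathrm{Supp}(I_{0i})$); that $I_{1i}$ really is an interval module, i.e. its support is ``connected and convex'' in the sense of Remark 2.2, which follows because it is a union of boxes all containing $\alpha_i$; and that a nonzero $g_{1i}^{1j}$ forces $\alpha_i\le\alpha_j$ together with the existence of a maximal $\beta_{jk}$ with $\mathrm{Supp}(I_{1i})\cap K[\alpha_j,\beta_{jk}]\neq\emptyset$, which is what kills the off-diagonal constants $c_{ij}$. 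Once these geometric facts are in place, the rest of the proof is a transcription of the proof of Theorem 3.1, and I would present it as ``construct $M'$ by the dual interval construction applied to the (not necessarily rectangular) indecomposable decomposition of $M$; the indecomposability argument of Theorem 3.1 applies verbatim, the only change being that in the bottom layer $\mathrm{Hom}(I_{0i},I_{0j})$ may have dimension greater than one, which was already accounted for.''
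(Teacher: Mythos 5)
Your reduction goes in the wrong direction, and the specific mechanism you propose for the new layer breaks down precisely at the point where $M$ stops being an interval module. You place $M=\bigoplus_i I_{0i}$ at the bottom and map each summand into an interval ``hull'' $I_{1i}$ via ``the componentwise identity on $\Supp(I_{0i})$.'' But an arbitrary finite indecomposable summand $I_{0i}$ can have $\dim (I_{0i})_\alpha>1$ (this is the whole difficulty of Theorem 3.3 over Theorem 3.1 --- see the family in Fig.~2), while your $I_{1i}$ has all vector spaces of dimension $0$ or $1$; there is no componentwise identity, only some choice of map with nontrivial kernel. This kills the endomorphism computation at the bottom layer: from $c_{ii}\,f_{0i}^{1i}\circ G_{0i}^{0i}=c\,f_{0i}^{1i}$ you can only determine $G_{0i}^{0i}$ modulo maps landing in $\ker f_{0i}^{1i}$, and since $\End(I_{0i})$ is merely local (it can contain nilpotents) rather than $\cong K$, you cannot conclude $c_{ii}G_{0i}^{0i}=c\,\Id$. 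The ``$k>1$ is possible'' paragraph you invoke from the proof of Theorem 3.1 handles $\Hom$ between two \emph{interval} summands having several viable components; it says nothing about summands with higher-dimensional fibers and nontrivial endomorphism rings, so it does not transfer.

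The paper resolves this by running the construction in the opposite direction: $M$ sits at the \emph{top} (layer $6$), and the adjacent layer $5$ is $\bigoplus_{i,j} I_{5ij}$, where $I_{5ij}$ is the cyclic interval submodule generated by the $j$-th element $g_{6ij}$ of a minimal generating set of $I_{6i}$, with $A_{5ij}^{6i}\colon I_{5ij}\to I_{6i}$ sending generator to generator; layers $0$--$4$ are then the main interval construction applied to layer $5$. The point is that the maps go \emph{into} $M$ and are jointly surjective onto each $I_{6i}$, so the relation $c_{ii}G_{6i}^{6i}A_{5i\ell}^{6i}=cA_{5i\ell}^{6i}$ forces $c_{ii}G_{6i}^{6i}(g_{6i\ell})=c\,g_{6i\ell}$ on a generating set and hence $c_{ii}G_{6i}^{6i}=c\,\Id_{I_{6i}}$ on all of $I_{6i}$, with no need for $\End(I_{6i})\cong K$. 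Your dual-flavored variant would need the analogous injectivity/cogeneration property for $I_{0i}\to\bigoplus_j I_{1ij}$, which the support-hull construction does not provide. So the proposal as written has a genuine gap; to fix it you would either have to switch to the paper's generator-based ``main'' construction, or develop a dual (cogenerator/injective-envelope) argument from scratch.
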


%

We have one construction for $M'$, called the \textbf{main construction}, which is defined as follows: Place $M=\bigoplus\limits_{i=1}^m I_{6i}$ in layer $6$ of $M'$. Choose a minimal generating set $\{g_{6ij}\}_{j=1}^{k_i}$ for $I_{6i}$, and let $g_{6ij}\in (I_{6i})_{\alpha_{ij}}$ for all $i,j$. Let $I_{5ij}$ denote the span of $g_{6ij}$ in $I_{6i}$, meaning 

\[ (I_{5ij})_\alpha:=\left\{
\begin{array}{ll}
      \Span(\phi^{I_{6i}}_{\alpha_{ij},\alpha}(g_{6ij}))\subset (I_{6i})_\alpha & \text{if } \alpha_{ij}\leq\alpha \\
      0 & \text{otherwise} \\
\end{array}
\right. \]
and
\[ \phi^{I_{5ij}}_{\alpha,\beta}=\left\{
\begin{array}{ll}
      \Id_ K & \text{if } (I_{5ij})_\alpha\cong (I_{5ij})_\beta\cong K \\
      0 & \text{otherwise.} \\
\end{array}
\right. \] Notice that  $I_{5ij}$ is a finite indecomposable interval $n$-dimensional persistence module with a single generator $g_{5ij}$. For each $i,j$ define a homomorphism 
 $A_{5ij}^{6i}:I_{5ij}\rightarrow I_{6i}$ by $A_{5ij}^{6i}(g_{5ij})=g_{6ij}$. In layer $5$ of $M'$, place $\bigoplus\limits_{i=1}^m\bigoplus\limits_{j=1}^{k_i} I_{5ij}$ and define the maps $\phi^{M'}$ between layers $5$ and $6$ by
$$\phi^{M'}_5=\begin{bmatrix}
A_{511}^{61} & A_{512}^{61} & \hdots & A_{51k_1}^{61} & &&&&&&&&\\
&&&&A_{521}^{62} &A_{522}^{62}&\hdots&A_{52k_2}^{62}&&&&&\\
&&& & &&&&\ddots &&&&\\
&&& & &&&&&A_{5m1}^{6m}&A_{5m2}^{6m}&\hdots&A_{5mk_m}^{6m}\\
\end{bmatrix}.$$

Because layer $5$ is a finite interval $n$-dimensional persistence module, we may attach the main interval construction for layer $5$ of $M'$ in layers $0-4$. This completes our construction of $M'$.

\begin{proof}
We now show that this construction yields a finite indecomposable $(n+1)$-dimensional persistence module $M'$. Finiteness is trivial. Let $f\in\End(M')$. Then $f$ is equivalent to a set of endomorphisms $f_j$ on each layer $j$ of $M'$ such that $\phi_{i-1}^{M'} \circ f_{i-1}= f_i \circ \phi^{M'}_{i-1}$ for all $i$. By Lemma 2.3, $f_0=[cf_{01}^{01}]$ for some $c\in K$. By a proof similar to that of Theorem 3.1, this implies that $f_i$ is fully determined by $c$ for all $i\in[1,5]$. In particular, 
$$f_5=\begin{bmatrix}
cf_{511}^{511} &  & &&&&&&&&&\\
 & cf_{512}^{512} & &&&&&&&&&\\
  & &\ddots &&&&&&&&&\\
   & & &cf_{51k_1}^{51k_1}&&&&&&&&\\
      & & &&cf_{521}^{521}&&&&&&&\\
            & & &&&cf_{522}^{522}&&&&&&\\
                & & &&&&\ddots&&&&&\\
             & & &&&&&cf_{52k_2}^{52k_2}&&&&\\
                   & & &&&&&&cf_{5m1}^{5m1}&&&\\
                               & & &&&&&&&cf_{5m2}^{5m2}&&\\
                                & & &&&&&&&&\ddots&\\
                                    & & &&&&&&&&&cf_{5mk_m}^{5mk_m}\\
   
\end{bmatrix}.$$ 

We may write $f_6=[c_{ij}G_{6i}^{6j}]_{i,j}$ where $G_{6i}^{6j}\in \Hom(I_{6i}^{6j})$ for all $i,j$. 

We thus have $\phi^{M'}_5\circ f_5= c\phi^{M'}_5= f_6\circ\phi^{M'}_5$\\
$=\begin{bmatrix}
H_{511}^{61}&H_{512}^{61}&\hdots&H_{51k_1}^{61}&H_{521}^{61}&H_{522}^{61}&\hdots&H_{52k_2}^{61}&\hdots&H_{5m1}^{61}&H_{5m2}^{61}&\hdots&H_{5mk_m}^{61}\\
H_{511}^{62}&H_{512}^{62}&\hdots&H_{51k_1}^{62}&H_{521}^{62} &H_{522}^{62}&\hdots&H_{52k_2}^{62}&\hdots&H_{5m1}^{62}&H_{5m2}^{62}&\hdots&H_{5mk_m}^{62}\\
\vdots&\vdots&&\vdots & \vdots&\vdots&&\vdots& &\vdots&\vdots&&\vdots\\
H_{511}^{6m}&H_{512}^{6m}&\hdots&H_{51k_1}^{6m}&H_{521}^{6m}&H_{522}^{6m}&\hdots&H_{52k_2}^{6m}&\hdots&H_{5m1}^{6m}&H_{5m2}^{6m}&\hdots&H_{5mk_m}^{6m}\\
\end{bmatrix},$ \\
where $H_{5i\ell}^{6j}:=c_{ij}G_{6i}^{6j}A_{5i\ell}^{6i}$ for all $i,j$. We conclude that \begin{equation}c_{ii}G_{6i}^{6i}A_{5i\ell}^{6i}=cA_{5i\ell}^{6i}\end{equation} for all $i$ and \begin{equation}c_{ij}G_{6i}^{6j}A_{5i\ell}^{6i}=0\end{equation} whenever $i\neq j$ and $\ell\in[1,k_i]$. 

We first wish to show that $c_{ii}G_{6i}^{6i}=c\Id_{I_{6i}}$ for all $i$. By equation (2), we have that \linebreak $c_{ii}G_{6i}^{6i}A_{5i\ell}^{6i}(g_{5i\ell})=cA_{5i\ell}^{6i}(g_{5i\ell})$ for all $i, \ell$. By the definition of $A_{5i\ell}^{6i}$, this implies $c_{ii}G_{6i}^{6i}(g_{6i\ell})=cg_{6i\ell}$ for all $i,\ell$. Because $\{g_{6i\ell}\}_{\ell}$ generates $I_{6i}$, we may conclude that $c_{ii}G_{6i}^{6i}=c\Id_{I_{6i}}$, as desired.

We also claim that that $c_{ij}G_{6i}^{6j}=0$ for all $i\neq j$. If $G_{6i}^{6j}=0$, we are done. Otherwise, there is a generator $g_{6i\ell}$ of $I_{6i}$ such that  $G_{6i}^{6j}(g_{6i\ell})\neq 0$. Thus $G_{6i}^{6j}A_{5i\ell}^{6i}(g_{5i\ell})\neq 0$. Comparing to equation (3) yields that $c_{ij}=0$. Thus $c_{ij}G_{6i}^{6j}=0$ for all $i\neq j$.

Thus $f_6=c\Id_M$ and $f$ is fully determined by the choice of $c\in K$, implying $\End(M')\cong K$. By Lemma 2.1, $M'$ is indecomposeable.
\end{proof}

\section{Future Research Directions}
Of course, it may be possible to strengthen the results in this paper. Some open questions are: Can Corollary 3.2 be strengthened to a statement that any finite zigzag persistence module can be embedded into an indecomposable $2$-dimensional persistence module, rather than a $3$D persistence module? Given a finite $n$-dimensional persistence module $M$, is there an indecomposable $(n+1)$-dimensional persistence module $M''$ which has $M$ as a hyperplane restriction and has a 'smaller' support than $M'$ in some sense? Is Theorem 3.3 generalizeable to $n$D persistence modules over $\mathbb{R}^n$ rather than $\mathbb{N}^n$?  Note that $n$D persistence modules over $\mathbb{R}^n$ have very different properties than the $n$-dimensional persistence modules over $\mathbb{N}^n$, so such a generalization may be highly nontrivial [M].


\begin{thebibliography}{[AMSS2]}

\raggedbottom
\bibitem[CZ]{CZ} G. Carlsson, A. Zomorodian. The Theory of Multidimensional Persistence. Disc. Comput. Geom. 71-93, 2009.
\bibitem[BE1]{BE1} M. Buchet, E.G. Escolar. Every 1D Persistence Module is a Restriction of Some Indecomposable 2D Persistence Module. J. Appl. and Comput. Top. 2020.
\bibitem[LW]{LW} M. Lesnick, M. Wright. Interactive Visualization of 2-D Persistence Modules. arXiv: 1512.00180, preprint. 2015.
\bibitem[ABENY]{ABENY} H. Asashiba, M. Buchet, E.G. Escolar, K. Nakashima,
M. Yoshiwaki. On Interval Decomposability of 2d Persistence Modules. arXiv:1812.05261, preprint. 2018.
\bibitem[AENY]{AENY} H. Asashiba, E.G. Escolar, K. Nakashima,
M. Yoshiwaki. On Approximation of 2D Persistence Modules by Interval-Decomposables. 	arXiv:1911.01637, preprint. 2019.
\bibitem[B]{B} H.B. Bjerkevik. Stability of Higher-Dimensional Interval Decomposable Persistence Modules. arXiv:1609.02086, preprint. 2016.
\bibitem[DX]{DX} T.K Dey, C. Xin. Computing Bottleneck Distance for Multi-parameter
Interval Decomposable Modules. arXiv:1803.02869, preprint. 2018.
\bibitem[A]{A} I. Assem, A. Skowronski, D. Simson Elements of the Representation Theory of Associative Algebras: Volume 1: Techniques of Representation Theory. Cambridge University Press. 2006.
\bibitem[G]{G} P. Gabriel. Unzerlegbare darstellungen I. Manuscripta Mathematica. 71-103, 1972.
\bibitem[BE2]{BE2}M. Buchet, E.G. Escolar. Realizations of Indecomposable Persistence Modules of Arbitrarily Large Dimension. Symp. Comput. Geom. 2018.
\bibitem[CdS]{CdS} G. Carlsson, V. de Silva. Zigzag Persistence. Found. Comput. Math. 367-405, 2010.
\bibitem[M]{M} E. Miller. Homological Algebra of Modules over Posets. 	arXiv:2008.00063, preprint. 2020.
\end{thebibliography}
\end{document}